\newtheorem{theorem}{Theorem}[section]
\newtheorem{lemma}[theorem]{Lemma}
\newtheorem{prop}[theorem]{Proposition}
\newtheorem{cor}[theorem]{Corollary}
\theoremstyle{definition}
\theoremstyle{remark}
\newtheorem{remark}[theorem]{Remark}
\numberwithin{equation}{section}
\DeclareMathOperator*{\esssup}{ess\,sup}
\let \la=\lambda
\let \e=\varepsilon
\let \d=\delta
\let \o=\omega
\let \a=\alpha
\let \f=\varphi
\let \O=\Omega
\let \si=\sigma
\let \ga=\gamma
\begin{document}
\title[On pointwise and weighted estimates]
{On pointwise and weighted estimates for commutators of Calder\'on-Zygmund operators}

\author{Andrei K. Lerner}
\address{Department of Mathematics,
Bar-Ilan University, 5290002 Ramat Gan, Israel}
\email{lernera@math.biu.ac.il}

\thanks{The first author was supported by the Israel Science Foundation (grant No. 953/13). The third author was supported by Grant MTM2012-30748, Spanish Government}

\author{Sheldy Ombrosi}
\address{Departamento de Matem\'atica\\
Universidad Nacional del Sur\\
Bah\'ia Blanca, 8000, Argentina}\email{sombrosi@uns.edu.ar}

\author{Israel P. Rivera-R\'{\i}os}
\address{IMUS \& Departamento de An\'alisis Matem\'atico, Universidad de Sevilla, Sevilla, Spain}
\email{petnapet@gmail.com}

\begin{abstract}
In recent years, it has been well understood that a Calder\'on-Zygmund operator $T$ is pointwise controlled
by a finite number of dyadic operators of a very simple structure (called the sparse operators).
We obtain a similar pointwise estimate for the commutator $[b,T]$ with a locally integrable function $b$.
This result is applied into two directions. If $b\in BMO$, we improve several weighted weak type bounds
for $[b,T]$. If $b$ belongs to the weighted $BMO$, we obtain a quantitative form of the two-weighted bound
for $[b,T]$ due to Bloom-Holmes-Lacey-Wick.
\end{abstract}

\keywords{Commutators, Calder\'on-Zygmund operators, Sparse operators, weighted inequalities.}

\subjclass[2010]{42B20, 42B25}

\maketitle

\section{Introduction}
\subsection{A pointwise bound for commutators}
In the past decade, a question about sharp weighted inequalities has leaded to a much better understanding of classical Calder\'on-Zygmund operators.
In particular, it was recently discovered by several authors (see  \cite{CR,HRT,La,L,LN}, and also \cite{BFP,CPO} for some interesting developments) that
a Calder\'on-Zygmund operator is dominated pointwise by a finite number of sparse operators ${\mathcal A}_{\mathcal S}$ defined by
$${\mathcal A}_{\mathcal S}f(x)=\sum_{Q\in {\mathcal S}}f_Q\chi_Q(x),$$
where $f_Q=\frac{1}{|Q|}\int_Qf$ and ${\mathcal S}$ is a sparse family of cubes from ${\mathbb R}^n$ (the latter means
that each cube $Q\in {\mathcal S}$ contains a set $E_Q$ of comparable measure and the sets $\{E_Q\}_{Q\in {\mathcal S}}$ are pairwise disjoint).

In this paper we obtain a similar domination result for the commutator $[b,T]$ of a Calder\'on-Zygmund operator $T$ with a locally integrable function $b$,
defined by
$$[b,T]f(x)=bTf(x)-T(bf)(x).$$
Then we apply this result in order to derive several new weighted weak and strong type inequalities for $[b,T]$.

Throughout the paper, we shall deal with $\omega$-Calder\'on-Zygmund operators $T$ on ${\mathbb R}^n$.
By this we mean that $T$ is $L^2$ bounded, represented~as
$$Tf(x)=\int_{{\mathbb R}^n}K(x,y)f(y)dy\quad\text{for all}\,\,x\not\in \text{supp}\,f,$$
with kernel $K$ satisfying the size condition
$|K(x,y)|\le \frac{C_K}{|x-y|^n},x\not=y,$ and the smoothness condition
$$|K(x,y)-K(x',y)|+|K(y,x)-K(y,x')|\le \o\left(\frac{|x-x'|}{|x-y|}\right)\frac{1}{|x-y|^n}$$
for $|x-y|>2|x-x'|$, where $\omega:[0,1]\to [0,\infty)$ is continuous, increasing, subadditive and $\o(0)=0$.

In \cite{La}, M. Lacey established a pointwise bound by sparse operators for $\o$-Calder\'on-Zygmund operators with $\o$ satisfying
the Dini condition $[\omega]_{\text{\rm{Dini}}}=\int_0^1\omega(t)\frac{dt}{t}<\infty.$ For such operators we adopt the notation
$$C_T=\|T\|_{L^2\to L^2}+C_K+[\omega]_{\text{\rm{Dini}}}.$$

A quantitative version of Lacey's result due to T. Hyt\"onen, L. Roncal and O. Tapiola \cite{HRT} states that
\begin{equation}\label{quant}
|Tf(x)|\le c_nC_T\sum_{j=1}^{3^n}{\mathcal A}_{\mathcal S_j}|f|(x).
\end{equation}
An alternative proof of this result was obtained by the first author \cite{L}.

In order to state an analogue of (\ref{quant}) for commutators, we introduce the sparse operator ${\mathcal T}_{\mathcal S,b}$
defined by
$${\mathcal T}_{{\mathcal S},b}f(x)=\sum_{Q\in {\mathcal S}}|b(x)-b_Q|f_Q\chi_Q(x).$$
Let ${\mathcal T}^{\star}_{{\mathcal S},b}$ denote the adjoint operator to ${\mathcal T}_{\mathcal S,b}$:
$${\mathcal T}^{\star}_{\mathcal S,b}f(x)=\sum_{Q\in {\mathcal S}}\left(\frac{1}{|Q|}\int_Q|b-b_Q|f\right)\chi_Q(x).$$

Our first main result is the following. Its proof is based on ideas developed in \cite{L}.

\begin{theorem}\label{commpoint}
Let $T$ be an $\omega$-Calder\'on-Zygmund operator with $\omega$ satisfying the Dini condition, and let $b\in L^1_{\text{loc}}$.
For every compactly supported $f\in L^{\infty}({\mathbb R}^n)$, there exist $3^n$ dyadic lattices ${\mathscr D}^{(j)}$
and $\frac{1}{2\cdot 9^n}$-sparse families ${\mathcal S}_j\subset {\mathscr D}^{(j)}$ such that for a.e. $x\in {\mathbb R}^n$,
\begin{equation}\label{pointb}
|[b,T]f(x)|\le c_nC_T\sum_{j=1}^{3^n}\big({\mathcal T}_{{\mathcal S}_j,b}|f|(x)+{\mathcal T}^{\star}_{{\mathcal S}_j,b}|f|(x)\big).
\end{equation}
\end{theorem}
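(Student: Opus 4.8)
The plan is to mimic the (by now standard) local-mean-oscillation / adaptive-stopping-cube machinery used in \cite{L} to prove \eqref{quant}, but now applied to the commutator. The key algebraic observation is the pointwise identity
\begin{equation*}
[b,T]f(x) = (b(x)-c)Tf(x) - T\big((b-c)f\big)(x),
\end{equation*}
valid for every constant $c$. This will let us, on each cube $Q$ arising in the stopping procedure, subtract off the average $b_Q$ and treat $(b-b_Q)f$ as the new input for $T$. So the scheme is: fix a cube $Q_0$ containing $\mathrm{supp}\,f$ (and argue on a fixed dyadic grid, summing over the $3^n$ shifted grids at the end), run the Calder\'on--Zygmund stopping-time argument that produces a $\frac12$-sparse family of dyadic subcubes where one controls the local oscillation of $T$ applied to a function, and bookkeep the two "error" terms that the commutator structure forces: one carrying the factor $|b(x)-b_{Q}|$ outside (giving rise to $\mathcal T_{\mathcal S,b}$) and one carrying $\frac1{|Q|}\int_Q|b-b_Q|f$ (giving rise to $\mathcal T^\star_{\mathcal S,b}$).

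Concretely, I would first reduce, exactly as in \cite{L}, to a local estimate for the "grand maximal truncated operator"
$\mathcal M_T$ and establish that for each cube $Q$ in the lattice,
\begin{equation*}
\Big\| \big([b,T]f - \big([b,T]f\big)_{Q}\big)\chi_Q \Big\|_{L^{1,\infty}(Q)/|Q|}
\lesssim C_T\Big( |b-b_Q|_Q\, f_{3Q} + \big(|b-b_Q|\,|f|\big)_{3Q} + \text{(local part)}\Big),
\end{equation*}
where the local part is handled by recursing into the next generation of stopping cubes. The decomposition $[b,T]f = (b-b_Q)Tf - T((b-b_Q)f)$ splits the oscillation of $[b,T]f$ on $Q$ into: (i) the oscillation of $(b-b_Q)Tf$, which one controls by Hölder/John--Nirenberg-type arguments together with the already-known sparse bound \eqref{quant} for $Tf$ — this is where the factor $|b(x)-b_Q|$ stays attached to the point $x$ and feeds $\mathcal T_{\mathcal S,b}$; and (ii) the oscillation of $T((b-b_Q)f)$, for which the known local oscillation estimate for CZOs applies to the function $(b-b_Q)f$, and the resulting average $\frac1{|3Q|}\int_{3Q}|b-b_Q||f|$ feeds $\mathcal T^\star_{\mathcal S,b}$; the tail/far contributions, via the Dini condition, are absorbed into $c_nC_T$ times these same averages. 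Iterating the stopping construction and collecting the contributions over all generations yields the sparse family $\mathcal S_j$ and the bound \eqref{pointb}, with the sparseness constant degrading from $\frac12$ to $\frac{1}{2\cdot 9^n}$ because we now enlarge cubes to $3Q$ (and there are $3^n$ such, hence $9^n$).

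The main obstacle I anticipate is (ii): controlling the local oscillation of $T\big((b-b_Q)f\big)$ and its tails with constants that are \emph{uniform in $b$} and only involve the average $\frac1{|Q|}\int_Q|b-b_Q||f|$ rather than something like $\|b-b_Q\|_{\exp L}\cdot f_Q$. One has to be careful that the stopping-time argument is run with respect to a maximal operator adapted to $(b-b_Q)f$ on each cube separately, and that when passing to a child cube $Q'\subset Q$ the "old" correction $b-b_Q$ must be rewritten as $(b-b_{Q'}) + (b_{Q'}-b_Q)$ so that the piece $(b_{Q'}-b_Q)$ — a constant on $Q'$ — can be pulled out and estimated by the telescoping/sparse control of the averages $|b_{Q'}-b_Q|$, which is itself a standard consequence of sparseness. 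Keeping this recursion bounded, with all constants dimensional multiples of $C_T$, is the technical heart of the argument; everything else is the familiar sparse-domination bookkeeping.
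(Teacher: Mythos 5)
Your starting point is the right one --- the invariance $[b,T]f=[b-c,T]f$ and the iterated stopping-time scheme of \cite{L} are exactly what the paper uses --- but three of the steps you sketch would not survive execution. First, the theorem assumes only $b\in L^1_{\text{loc}}$, so any appeal to John--Nirenberg or $\exp L$ control of $b-b_Q$ is unavailable; the paper never uses such bounds here. Second, for the term $(b(x)-b_Q)Tf(x)$ you propose to invoke the known sparse bound \eqref{quant} for $Tf$ as a black box; but that produces $\sum_R|b(x)-b_Q|\,f_R\chi_R$ with the constant $b_Q$ mismatched against the cubes $R$ of the new sparse family, which is not ${\mathcal T}_{{\mathcal S},b}$. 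The paper instead keeps $|b(x)-b_{R_{Q_0}}|$ as a pointwise multiplier of $|T(f\chi_{3Q_0})(x)|$ and bounds the latter by $c_nC_T|f|_{3Q_0}$ off an exceptional set $E$ with $|E|\le\frac{1}{2^{n+2}}|Q_0|$, defined via the level sets of $|f|$, ${\mathcal M}_{T,Q_0}f$, $|(b-b_{R_{Q_0}})f|$ and ${\mathcal M}_{T,Q_0}\big((b-b_{R_{Q_0}})f\big)$ simultaneously; the Calder\'on--Zygmund decomposition of $\chi_E$ then produces the next-generation cubes $P_j$ handling both terms at once. Relatedly, your local-mean-oscillation formulation bounds a scalar quantity per cube and therefore cannot retain the pointwise factor $|b(x)-b_Q|$ that defines ${\mathcal T}_{{\mathcal S},b}$; one must run the pointwise recursion, not the oscillation decomposition.

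Third, the ``technical heart'' you identify --- rewriting $b-b_Q=(b-b_{Q'})+(b_{Q'}-b_Q)$ and telescoping the constants down the recursion --- is not needed and would take you to a different (weaker for this purpose) estimate. Since $[b,T](f\chi_{3P_j})=[b-b_{R_{P_j}},T](f\chi_{3P_j})$, the recursive claim is simply re-applied on each stopping cube $P_j$ with the \emph{fresh} constant $b_{R_{P_j}}$; no old constant is carried along, so the two error terms at every generation automatically appear with the correct matching cube. The telescoping route, which converts differences of averages into sums of oscillations $\Omega(b;R)$ over a sparse family, is precisely what the paper does later (Lemma \ref{augm}) for the Bloom-type Theorem \ref{bhlw} --- but it is not how Theorem \ref{commpoint} is proved, and pursuing it here would obscure the clean form \eqref{pointb}.
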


Some comments about this result are in order. A classical theorem of R. Coifman, R. Rochberg and G.~Weiss~\cite{CRW}
says that the condition $b\in BMO$ is sufficient (and for some $T$ is also necessary) for the $L^p$ boundedness of $[b,T]$ for all $1<p<\infty$.
It is easy to see that the definition of ${\mathcal T}_{\mathcal S,b}$ is adapted to this condition. In Lemma \ref{tbf} below we show that if $b\in BMO$,
then ${\mathcal T}_{\mathcal S,b}$ is of weak type $(1,1)$. On the other hand, C. P\'erez \cite{P2} showed that $[b,T]$ is not of weak type $(1,1)$.
Therefore, the second term ${\mathcal T}^{\star}_{{\mathcal S}_j,b}$ cannot be removed from (\ref{pointb}).

Notice that the first term ${\mathcal T}_{\mathcal S,b}$ cannot be removed from (\ref{pointb}), too. Indeed, a standard argument (see the proof of (\ref{bmoest}) in
Section 2.2) based on the John-Nirenberg inequality shows that
if $b\in BMO$, then
$${\mathcal T}^{\star}_{\mathcal S,b}f(x)\le c_n\|b\|_{BMO}\sum_{Q\in {\mathcal S}}\|f\|_{L\log L,Q}\chi_Q(x).$$
But it was recently observed \cite{PRR2} that $[b,T]$ cannot be pointwise bounded by an $L\log L$-sparse operator appearing here.

In the following subsections we will show applications of Theorem~\ref{commpoint} to weighted weak and strong type inequalities for $[b,T]$.

\subsection{Improved weighted weak type bounds}
Given a weight $w$ (that is, a non-negative locally integrable function)
and a measurable set $E\subset {\mathbb R}^n$, denote $w(E)=\int_Ewdx$ and
$$w_f(\la)=w\{x\in {\mathbb R}^n:|f(x)|>\la\}.$$

In the classical work \cite{FS}, C. Fefferman and E.M. Stein obtained the following  weighted weak type $(1,1)$ property of
the Hardy-Littlewood maximal operator $M$: for an arbitrary weight $w$,
\begin{equation}\label{fs}
w_{Mf}(\la)\le \frac{c_n}{\la}\int_{{\mathbb R}^n}|f(x)|Mw(x)dx\quad(\la>0).
\end{equation}

Only forty years after that, M.C. Reguera and C. Thiele \cite{RT} gave an example showing that a similar estimate is not true
for the Hilbert transform instead of $M$ on the left-hand side of (\ref{fs}) (they disproved by this the so-called Muckenhoupt-Wheeden
conjecture). On the other hand, it was shown earlier by C. P\'erez \cite{P1}
that an analogue of (\ref{fs}) holds for a general class of Calder\'on-Zygmund operators but with a slightly bigger Orlicz maximal operator
$M_{L(\log L)^{\e}}$ instead of $M$ on the right-hand side. This result was reproved with a better dependence on~$\e$ in \cite{HP}:
if $T$ is a Calder\'on-Zygmund operator and $0<\e\le 1$, then
\begin{equation}\label{eq:HP}
w_{Tf}(\la)\le \frac{c(n,T)}{\e}\frac{1}{\la}\int_{\mathbb{R}^{n}}|f(x)|M_{L(\log L)^{\varepsilon}}w(x)dx\quad(\la>0).
\end{equation}

A general Orlicz maximal operator $M_{\f(L)}$ is defined for a Young function $\f$ by
$$M_{\f(L)}f(x)=\sup_{Q\ni x}\|f\|_{\f,Q},$$
where the supremum is taken over all cubes $Q\subset {\mathbb R}^n$ containing $x$, and
$\|f\|_{\f,Q}$ is the normalized Luxemburg norm defined by
$$
\|f\|_{\f,Q}=\inf\Big\{\la>0:\frac{1}{|Q|}\int_Q\f(|f(y)|/\la)dy\le 1\Big\}.
$$
If $\f(t)=t\log^{\a}({\rm{e}}+t), \a>0$, denote $M_{\f(L)}=M_{L(\log L)^{\a}}$.

Recently, C. Domingo-Salazar, M. Lacey and G. Rey \cite{DLR} obtained the following improvement of (\ref{eq:HP}):
if $C_{\f}=\int_{1}^{\infty}\frac{\f^{-1}(t)}{t^2\log(\rm{e}+t)}dt<\infty,$ then
\begin{equation}\label{border}
w_{Tf}(\la)\le \frac{c(n,T)C_{\f}}{\la}\int_{{\mathbb R}^n}|f(x)|M_{\f(L)}w(x)dx.
\end{equation}
It is easy to see that if $\f(t)=t\log^{\e}({\rm{e}}+t)$, then $C_{\f}\sim\frac{1}{\e}$, and thus (\ref{border})
contains (\ref{eq:HP}) as a particular case. On the other hand, (\ref{border}) holds for smaller functions
than $t\log^{\e}({\rm{e}}+t)$, for instance, for $\f(t)=t\log\log^{\a}({\rm e}^{\rm e}+t),\a>1$. The key ingredient in
the proof of (\ref{border}) was a pointwise control of $T$ by sparse operators expressed in (\ref{quant}).

Consider now the commutator $[b,T]$ of $T$ with a $BMO$ function $b$.
The following analogue of (\ref{eq:HP}) was recently obtained by the third author and C. P\'erez \cite{PRR1}:
for all $0<\e\le 1$,
\begin{equation}\label{eq:PRR}
w_{[b,T]f}(\la)\le\frac{c(n,T)}{\varepsilon^{2}}\int_{\mathbb{R}^{n}}\Phi\left(\|b\|_{BMO}\frac{|f(x)|}{\lambda}\right)M_{L(\log L)^{1+\varepsilon}}w(x)dx,
\end{equation}
where $\Phi(t)=t\log({\rm{e}}+t)$. Observe that $\Phi$ here reflects an unweighted $L\log L$ weak type estimate for $[b,T]$ obtained by C. P\'erez \cite{P2}.
Notice also that (\ref{eq:PRR}) with worst dependence on $\e$ was proved earlier in~\cite{PP}.

Similarly to the above mentioned improved weak type bound for Calder\'on-Zygmund operators (\ref{border}), we apply Theorem \ref{commpoint}
to improve (\ref{eq:PRR}). Our next result shows that (\ref{eq:PRR}) holds with $1/\e$ instead
of $1/\e^2$ and that $M_{L(\log L)^{1+\varepsilon}}$ in (\ref{eq:PRR}) can be replaced by smaller Orlicz maximal operators.

\begin{theorem}\label{weakcomm} Let $T$ be an $\o$-Calder\'on-Zygmund operator with $\o$ satisfying the Dini condition, and let $b\in BMO$.
Let $\f$ be an arbitrary Young function such that $C_{\f}=\int_{1}^{\infty}\frac{\f^{-1}(t)}{t^2\log(\rm{e}+t)}dt<\infty.$
Then for every weight $w$ and for every compactly supported $f\in L^{\infty}$,
\begin{equation}\label{impr}
w_{[b,T]f}(\la)\le c_nC_TC_{\f}\int_{\mathbb{R}^{n}}\Phi\left(\|b\|_{BMO}\frac{|f(x)|}{\lambda}\right)M_{(\Phi\circ\f)(L)}w(x)dx,
\end{equation}
where $\Phi(t)=t\log({\rm{e}}+t)$.
\end{theorem}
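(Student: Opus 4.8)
The plan is to reduce matters to the pointwise domination in Theorem~\ref{commpoint} and then estimate the distribution function of each sparse commutator $\mathcal{T}_{\mathcal{S},b}$ and its adjoint $\mathcal{T}^{\star}_{\mathcal{S},b}$ separately. Since $\Phi(ct)\le c^{2}\Phi(t)$ for $c\ge 1$, by Theorem~\ref{commpoint} and homogeneity it suffices to prove that for a $\tfrac{1}{2\cdot 9^{n}}$-sparse family $\mathcal{S}$ and $g=|f|/\lambda$,
\begin{equation}\label{eq:target-plan}
w\{x:\mathcal{T}_{\mathcal{S},b}g(x)>1/(c_nC_T)\}+w\{x:\mathcal{T}^{\star}_{\mathcal{S},b}g(x)>1/(c_nC_T)\}\le cC_{\f}\int_{\mathbb{R}^{n}}\Phi(\|b\|_{BMO}g)M_{(\Phi\circ\f)(L)}w\,dx,
\end{equation}
after which rescaling $\lambda$ recovers \eqref{impr}. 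I would normalize $\|b\|_{BMO}=1$.

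First I would handle $\mathcal{T}_{\mathcal{S},b}$. Following the Calderón–Zygmund–type decomposition used in \cite{DLR} for \eqref{border}, I would split the sparse family according to the value of $g_Q$: fix the maximal cubes where $g_Q>1$ (a bounded-overlap family whose union $\Omega$ has $|\Omega|\le\int g$) and separate the ``high'' part supported on these cubes from the ``low'' part where all averages are $\le 1$. For the low part the key point is that since the averages are small, one can afford to lose a logarithm: using $|b(x)-b_{Q}|$ together with the John–Nirenberg inequality (exactly as in the proof of \eqref{bmoest} alluded to in the excerpt, which produces an $L\log L$ gain), the contribution is controlled by $\sum_Q\|g\|_{L\log L,Q}\,$-type sums against $w$, and summing over the sparse family with the $C_\f$-weighted dyadic estimate of \cite{DLR} yields a bound by $C_\f\int \Phi(g)M_{(\Phi\circ\f)(L)}w$. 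For the high part one uses a good-$\lambda$ / exponential-decay argument on each stopping cube: on $Q$, $\mathcal{T}_{\mathcal{S},b}$ restricted to the subfamily is dominated by $|b-b_Q|$ times an $L\log L$ sparse operator, and the one-dimensional John–Nirenberg bound $|\{x\in Q:|b(x)-b_Q|>t\}|\le e^{2}e^{-t/c}|Q|$ forces super-exponential decay, so that $w\{x\in Q:\dots>1\}\lesssim \inf_Q M_{(\Phi\circ\f)(L)}w\cdot\int_Q \Phi(g)$; summing over the disjoint-enough stopping cubes closes this piece.

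Next, for the adjoint $\mathcal{T}^{\star}_{\mathcal{S},b}g(x)=\sum_{Q\in\mathcal S}\big(\tfrac1{|Q|}\int_Q|b-b_Q|g\big)\chi_Q(x)$, I would first absorb the oscillation factor: by the generalized Hölder inequality in the Orlicz scale, $\tfrac1{|Q|}\int_Q|b-b_Q|g\le c\,\|b-b_Q\|_{\exp L,Q}\,\|g\|_{L\log L,Q}\le c\,\|g\|_{L\log L,Q}$ since $\|b-b_Q\|_{\exp L,Q}\lesssim\|b\|_{BMO}=1$. Hence $\mathcal{T}^{\star}_{\mathcal{S},b}g$ is pointwise dominated by the $L\log L$-sparse operator $\sum_{Q}\|g\|_{L\log L,Q}\chi_Q$, and we are exactly in the setting of \eqref{border} (with $g$ replaced by something in the $L\log L$ class). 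More precisely, writing $\|g\|_{L\log L,Q}$ as an average of $\Phi(g)$ up to normalization, the weak-type estimate of Domingo-Salazar–Lacey–Rey applied to the sparse operator acting on $\Phi(g)$ gives $w\{\mathcal{T}^{\star}_{\mathcal{S},b}g>1/(c_nC_T)\}\le cC_\f\int_{\mathbb R^n}\Phi(\|b\|_{BMO}g)M_{(\Phi\circ\f)(L)}w\,dx$; here the extra $\Phi$ inside and the composition $\Phi\circ\f$ in the maximal operator appear precisely because we fed an $L\log L$-sized input into the $C_\f$-machine. Adding the two bounds gives \eqref{eq:target-plan} and hence \eqref{impr}.

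The main obstacle I anticipate is the bookkeeping in the low/high splitting for $\mathcal{T}_{\mathcal{S},b}$: one must arrange the stopping cubes so that the John–Nirenberg exponential decay in $|b-b_Q|$ interacts correctly with the sparseness constant $\tfrac{1}{2\cdot 9^n}$ and with the $C_\f$-weighted summation lemma of \cite{DLR}, and in particular one needs the $L\log L$ averages (not just $L^1$ averages) to be the objects being summed, so that the composition $\Phi\circ\f$ — rather than $\f$ alone — is what shows up in the maximal operator. Getting the constant to be linear in $C_\f$ (so that $\f(t)=t\log^\e(e+t)$ recovers the $1/\e$ improvement over the $1/\e^2$ of \eqref{eq:PRR}) requires being careful that no second logarithmic loss is incurred in either the high part of $\mathcal{T}_{\mathcal{S},b}$ or in the Hölder step for the adjoint; this is where the sharper quantitative form of the dyadic estimate from \cite{DLR} must be invoked rather than a cruder $L^1$–$L\log L$ argument.
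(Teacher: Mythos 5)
Your overall architecture coincides with the paper's: reduce via Theorem \ref{commpoint} to weak-type bounds for ${\mathcal T}_{{\mathcal S},b}$ and ${\mathcal T}^{\star}_{{\mathcal S},b}$, dominate the adjoint by an $L\log L$-sparse operator through (\ref{bmoest}), and run a Domingo-Salazar--Lacey--Rey level-set decomposition of the sparse family. However, your two key reductions have genuine gaps. For ${\mathcal T}^{\star}_{{\mathcal S},b}$ you propose to ``write $\|g\|_{L\log L,Q}$ as an average of $\Phi(g)$ up to normalization'' and then invoke the DLR estimate for the ordinary sparse operator applied to $\Phi(g)$. That pointwise domination fails exactly in the regime that matters: if $\|g\|_{L\log L,Q}=\la_0$ is small, submultiplicativity only gives $\la_0\le 2\Phi(1/\la_0)\cdot\frac{1}{|Q|}\int_Q\Phi(g)$, i.e.\ a loss of $\log({\rm e}+1/\la_0)$, which on the level-$k$ family ($\la_0\sim 4^{-k}$) is a factor $\sim k$. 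Carrying that factor into the DLR summation $\sum_k k/\bar\f^{-1}(2^{2^k})$ inserts an extra $\log\log$ into the integrand and, for $\f(t)=t\log^{\e}({\rm e}+t)$, degrades the constant to $1/\e^{2}$ --- precisely the bound the theorem is meant to improve. The paper circumvents this by reproving the DLR key lemma (Lemma \ref{keylemma}) directly at the level of Luxemburg norms $\|f\|_{\Psi,Q}$ for a general doubling Young function $\Psi$ (here $\Psi=\Phi$), so that the conversion $\Phi(4^k|f|)\le ck4^k\Phi(|f|)$ happens only inside the already doubly-exponentially decaying sum, and the residual $\log\log$ is absorbed by upgrading $\f$ to $\Phi\circ\f$ (Proposition \ref{common}). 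This generalized key lemma is the main new technical ingredient and is absent from your plan.

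For ${\mathcal T}_{{\mathcal S},b}$ your sketch conflates the operator with its adjoint. In ${\mathcal T}_{{\mathcal S},b}g(x)=\sum_Q|b(x)-b_Q|g_Q\chi_Q(x)$ the oscillation sits at the output variable, so John--Nirenberg cannot be used to produce ``$\sum_Q\|g\|_{L\log L,Q}$-type sums'': when one tests $\int_E({\mathcal T}_{{\mathcal S},b}g)w$, the $L\log L$ gain must land on $w$, not on $g$. The paper's Lemma \ref{tbf} splits each level-$k$ cube according to whether $|b(x)-b_Q|\le(3/2)^k$ --- handled by the key lemma with $\Psi(t)=t$, the geometric growth $(3/2)^k$ being beaten by $\bar\f^{-1}(2^{2^k})$ --- or not, in which case two generalized H\"older inequalities give $\frac{1}{|Q|}\int_{F_k(Q)}|b-b_Q|w\le \frac{c_n}{A^{-1}(|Q|/|F_k(Q)|)}\|w\|_{\Phi\circ\f,Q}$, combined with the superexponential smallness of $|F_k(Q)|$ from (\ref{jn}). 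Your ``high/low in $g_Q$'' splitting and the asserted bound $w\{x\in Q:\cdots>1\}\lesssim \inf_QM_{(\Phi\circ\f)(L)}w\cdot\int_Q\Phi(g)$ are not substitutes for these computations (and after discarding $\{Mf>1/4\}$ via Fefferman--Stein there is no ``high'' part at all). You correctly flag both difficulties as obstacles at the end, but the proposal as written does not overcome them, and so does not establish the stated $C_\f$-linear bound.
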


By Theorem \ref{commpoint}, the proof of (\ref{impr}) is based on weak type estimates for ${\mathcal T}_{\mathcal S,b}$ and
${\mathcal T}^{\star}_{\mathcal S,b}$. The maximal operator $M_{(\Phi\circ\f)(L)}$ appears in the weighted weak type $(1,1)$ estimate for ${\mathcal T}_{\mathcal S,b}$.
It is interesting that ${\mathcal T}^{\star}_{\mathcal S,b}$, being not of weak type $(1,1)$, satisfies
a better estimate than (\ref{impr}) with a smaller maximal operator than $M_{(\Phi\circ\f)(L)}$ (which one can deduce from Lemma \ref{lloglsparse} below).

We mention several particular cases of interest in Theorem \ref{weakcomm}. Notice that if $\f(t)\le t^2$ for $t\ge t_0$,  then
$$\Phi\circ\f(t)\le c\f(t)\log({\rm e}+t)\quad(t>0).$$
Hence, if $\f(t)=t\log^{\e}({\rm e}+t)$, $0<\e\le 1$, then simple estimates along with (\ref{impr}) imply
\begin{equation}\label{caseof}
w_{[b,T]f}(\la)\le \frac{c(n,T)}{\e}\int_{\mathbb{R}^{n}}\Phi\left(\|b\|_{BMO}\frac{|f(x)|}{\lambda}\right)M_{L(\log L)^{1+\varepsilon}}w(x)dx.
\end{equation}
Similarly, if $\f(t)=t\log\log^{1+\e}({\rm e}^{\rm e}+t),0<\e\le 1$, then
$$
w_{[b,T]f}(\la)\le \frac{c(n,T)}{\e}\int_{\mathbb{R}^{n}}\Phi\left(\|b\|_{BMO}\frac{|f(x)|}{\lambda}\right)M_{L(\log L)(\log\log L)^{1+\varepsilon}}w(x)dx.
$$

As an application of Theorem \ref{weakcomm}, we obtain an improved weighted weak type estimate for $[b,T]$ assuming that the weight $w\in A_1$. Recall that the latter condition
means that
$$[w]_{A_1}=\sup_{x\in {\mathbb R}^n}\frac{Mw(x)}{w(x)}<\infty.$$
Also we define the $A_{\infty}$ constant of $w$ by
$$[w]_{A_{\infty}}=\sup_{Q}\frac{1}{w(Q)}\int_QM(w\chi_Q)dx,$$
where the supremum is taken over all cubes $Q\subset {\mathbb R}^n$.
It was shown in~\cite{HP} that the dependence on $\e$ in (\ref{eq:HP}) implies the corresponding mixed $A_1$-$A_{\infty}$ estimate. In a similar way we have the following.

\begin{cor}\label{imprdep}
For every $w\in A_1$,
$$
w_{[b,T]f}(\la)\le c_nC_T[w]_{A_1}\Phi([w]_{A_{\infty}})\int_{{\mathbb R}^n}
\Phi\left(\|b\|_{BMO}\frac{|f(x)|}{\lambda}\right)w(x)dx,
$$
where $\Phi(t)=t\log({\rm e}+t)$.
\end{cor}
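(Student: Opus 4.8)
The plan is to deduce Corollary~\ref{imprdep} from Theorem~\ref{weakcomm} by choosing the Young function $\f$ appropriately and then estimating the resulting Orlicz maximal operator using the hypothesis $w\in A_1$. The natural choice is $\f(t)=t\log^{\e}(\mathrm{e}+t)$ with $\e$ to be optimized at the end. With this choice, as observed right after Theorem~\ref{weakcomm}, one has $\Phi\circ\f(t)\le c\,\f(t)\log(\mathrm{e}+t)\le c\,t\log^{1+\e}(\mathrm{e}+t)$, and $C_{\f}\sim 1/\e$; hence (\ref{impr}) (equivalently, its consequence (\ref{caseof})) gives
\begin{equation*}
w_{[b,T]f}(\la)\le \frac{c_nC_T}{\e}\int_{{\mathbb R}^n}\Phi\Big(\|b\|_{BMO}\frac{|f(x)|}{\la}\Big)M_{L(\log L)^{1+\e}}w(x)\,dx.
\end{equation*}

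The heart of the argument is then the pointwise bound $M_{L(\log L)^{1+\e}}w(x)\le c_n\,[w]_{A_1}\,[w]_{A_{\infty}}^{1+\e}\,\e^{-(1+\e)}\,w(x)$, or something of this shape, for $w\in A_1$. I would obtain this by combining two facts: first, that for $w\in A_1$ the Orlicz average $\|w\|_{\Psi,Q}$ for $\Psi(t)=t\log^{\a}(\mathrm{e}+t)$ is comparable to the usual average $w_Q$ up to a factor depending on $[w]_{A_{\infty}}$ and $\a$ — this follows from the standard reverse-type/self-improving estimate
$$\frac{1}{|Q|}\int_Q w\log^{\a}\Big(\mathrm{e}+\frac{w}{w_Q}\Big)\le c_n\,\a!\,[w]_{A_{\infty}}^{\a}\,w_Q,$$
a consequence of the $A_\infty$ property together with the Coifman--Rochberg-type observation that powers of $Mw$ and $w$ are comparable; and second, that $w_Q\le Mw(x)\le [w]_{A_1}w(x)$ for $x\in Q$. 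Taking the supremum over $Q\ni x$ yields $M_{L(\log L)^{1+\e}}w(x)\le c_n\,\Gamma(2+\e)\,[w]_{A_{\infty}}^{1+\e}\,[w]_{A_1}\,w(x)$, and since $0<\e\le 1$ the Gamma factor is bounded by an absolute constant.

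Plugging this into the weak type bound and absorbing the $\e$-dependence, one arrives at
$$w_{[b,T]f}(\la)\le c_nC_T\,[w]_{A_1}\,\frac{[w]_{A_{\infty}}^{1+\e}}{\e}\int_{{\mathbb R}^n}\Phi\Big(\|b\|_{BMO}\frac{|f(x)|}{\la}\Big)w(x)\,dx.$$
It remains to optimize in $\e\in(0,1]$. If $[w]_{A_{\infty}}\ge \mathrm{e}$, choosing $\e=1/\log[w]_{A_{\infty}}$ makes $[w]_{A_{\infty}}^{\e}=\mathrm{e}$ an absolute constant, so the factor $[w]_{A_{\infty}}^{1+\e}/\e$ becomes $c\,[w]_{A_{\infty}}\log[w]_{A_{\infty}}\le c\,\Phi([w]_{A_{\infty}})$; if $[w]_{A_{\infty}}<\mathrm{e}$ the factor is already bounded and $\Phi([w]_{A_{\infty}})\ge \Phi(1)>0$, so the estimate holds trivially. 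This gives exactly the claimed bound $c_nC_T[w]_{A_1}\Phi([w]_{A_{\infty}})$.

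The main obstacle is the second step: producing the clean pointwise estimate $M_{L(\log L)^{1+\e}}w\le c_n[w]_{A_1}[w]_{A_{\infty}}^{1+\e}w$ with the \emph{correct} (at most power) dependence on $[w]_{A_{\infty}}$ and an $\e$-uniform constant, since a careless use of John--Nirenberg or of the definition of the Luxemburg norm can lose a factor that is exponential in $\a=1+\e$ or blows up as $\e\to0$. The right tool is the sharp form of the $A_\infty$ self-improvement (in the spirit of the Fujii--Wilson $A_\infty$ constant used in \cite{HP}), which controls $L(\log L)^{\a}$ averages of $w$ by $w_Q$ with constant $c_n\,\a!\,[w]_{A_{\infty}}^{\a}$; once that is in hand the rest is the elementary optimization in $\e$ described above.
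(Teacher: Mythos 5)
Your proposal is correct and follows essentially the same route as the paper's proof: start from (\ref{caseof}), bound $M_{L(\log L)^{1+\e}}w$ pointwise by $c_n[w]_{A_\infty}^{1+\e}[w]_{A_1}w$ via the sharp reverse H\"older exponent $1+\frac{1}{c_n[w]_{A_\infty}}$, and then choose $\e\sim 1/\log(\mathrm{e}+[w]_{A_\infty})$. The paper packages the middle step slightly differently, writing $M_{L(\log L)^{1+\e}}w\le \frac{c}{\a^{1+\e}}M_{L^{1+(1+\e)\a}}w$ and then using $M_{L^{r_n}}w\le 2Mw$ with $r_n=1+\frac{1}{c_n[w]_{A_\infty}}$, but this is the same self-improvement estimate you invoke in Orlicz-average form.
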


This provides a logarithmic improvement of the corresponding bounds in \cite{OC,PRR1}.

\subsection{Two-weighted strong type bounds} Let $w$ be a weight, and let $1<p<\infty$. Denote $\si_w(x)=w^{-\frac{1}{p-1}}(x)$.
Given a cube $Q\subset {\mathbb R}^n$, set
$$[w]_{A_p,Q}=\frac{w(Q)}{|Q|}\left(\frac{\si_w(Q)}{|Q|}\right)^{p-1}.$$
We say that $w\in A_p$ if
$$[w]_{A_p}=\sup_{Q}[w]_{A_p,Q}<\infty.$$

As we have mentioned previously, pointwise bounds by sparse operators were motivated by sharp weighted norm inequalities. For example, (\ref{quant}) provides
a simple proof of the sharp $L^p(w)$ bound for $T$ (see \cite{HRT,L}):
\begin{equation}\label{hyt}
\|T\|_{L^p(w)}\le c_{n,p}C_T[w]_{A_p}^{\max\big(1,\frac{1}{p-1}\big)}.\quad(1<p<\infty)
\end{equation}
In the case of $\o$-Calder\'on-Zygmund operators with $\o(t)=ct^{\d}$, (\ref{hyt}) was proved by T. Hyt\"onen \cite{H} (see also \cite{Hyt2, Le} for
the history of this result and a different proof).

An analogue of (\ref{hyt}) for the commutator $[b,T]$ with a $BMO$ function~$b$ is the following sharp $L^p(w)$ bound due to D. Chung, C. Pereyra and C. P\'erez \cite{CPP}:
\begin{equation}\label{cpp}
\|[b,T]\|_{L^p(w)}\le c(n,p,T)\|b\|_{BMO}[w]_{A_p}^{2\max\big(1,\frac{1}{p-1}\big)}.\quad(1<p<\infty)
\end{equation}

Much earlier, S. Bloom \cite{SB} obtained an  interesting two-weighted result for the commutator of the Hilbert transform $H$:
if $\mu, \la\in A_p, 1<p<\infty, \nu=(\mu/\la)^{1/p}$ and $b\in BMO_{\nu}$, then
\begin{equation}\label{bloom}
\|[b,H]f\|_{L^p(\la)}\le c(p,\mu,\la)\|b\|_{BMO_{\nu}}\|f\|_{L^p(\mu)}.
\end{equation}
Here $BMO_{\nu}$ is the weighted $BMO$ space of locally integrable functions $b$ such that
$$\|b\|_{BMO_{\nu}}=\sup_{Q}\frac{1}{\nu(Q)}\int_Q|b-b_Q|dx<\infty.$$

Recently, I. Holmes, M. Lacey and B. Wick \cite{HLW} extended (\ref{bloom}) to $\o$-Calder\'on-Zygmund operators with $\o(t)=ct^{\d}$;
the key role in their proof was played by Hyt\"onen's representation theorem \cite{H} for such operators.
In the particular case when $\mu=\la=w\in A_2$ the approach in \cite{HLW} recovers (\ref{cpp}) (this was checked in \cite{HW};
and also, (\ref{bloom}) was extended in this work to higher-order commutators).

Using Theorem \ref{commpoint}, we obtain the following quantitative version of the Bloom-Holmes-Lacey-Wick result. It extends (\ref{bloom})
to any $\o$-Calder\'on-Zygmund operator with the Dini condition, and the explicit dependence on $[\mu]_{A_p}$ and $[\la]_{A_p}$ is found. Also, it can be viewed as a natural extension of (\ref{cpp})
to the two-weighted setting.

\begin{theorem}\label{bhlw} Let $T$ be an $\omega$-Calder\'on-Zygmund operator with $\o$ satisfying the Dini condition.
Let $\mu, \la\in A_p, 1<p<\infty,$ and $\nu=(\mu/\la)^{1/p}$. If $b\in BMO_{\nu}$, then
$$
\|[b,T]f\|_{L^p(\la)}\le c_{n,p}C_T\big([\mu]_{A_p}[\la]_{A_p}\big)^{\max\big(1,\frac{1}{p-1}\big)}\|b\|_{BMO_{\nu}}\|f\|_{L^p(\mu)}.
$$
\end{theorem}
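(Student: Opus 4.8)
\textbf{Proof proposal for Theorem \ref{bhlw}.}
The plan is to reduce the problem, via the pointwise domination of Theorem \ref{commpoint}, to two-weighted $L^p$ bounds for the sparse operators ${\mathcal T}_{{\mathcal S},b}$ and ${\mathcal T}^{\star}_{{\mathcal S},b}$. Since $[b,T]f$ is controlled pointwise by a finite sum of such operators (for a given compactly supported $f\in L^\infty$), it suffices to establish, for an arbitrary $\frac{1}{2\cdot 9^n}$-sparse family ${\mathcal S}$, an estimate of the form
\begin{equation*}
\|{\mathcal T}_{{\mathcal S},b}f\|_{L^p(\la)}+\|{\mathcal T}^{\star}_{{\mathcal S},b}f\|_{L^p(\la)}\le c_{n,p}\big([\mu]_{A_p}[\la]_{A_p}\big)^{\max(1,\frac{1}{p-1})}\|b\|_{BMO_{\nu}}\|f\|_{L^p(\mu)}.
\end{equation*}
A density argument then removes the restriction on $f$.

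For the first operator, I would test against $g\in L^{p'}(\la)$ with $\|g\|_{L^{p'}(\la)}=1$ and expand
\begin{equation*}
\langle {\mathcal T}_{{\mathcal S},b}f,g\,d\la\rangle=\sum_{Q\in {\mathcal S}}f_Q\int_Q|b(x)-b_Q|\,|g(x)|\,d\la(x),
\end{equation*}
and now I would bound $\int_Q|b-b_Q||g|\,d\la$ by splitting $|b-b_Q|\le |b-\langle b\rangle_Q^{\nu}|+|\langle b\rangle_Q^{\nu}-b_Q|$ or, more directly, by an appropriate Hölder/generalized Hölder inequality that peels off the factor $\|b\|_{BMO_\nu}\nu(Q)/|Q|$ times an average of $|g|$ against $\la$. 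After this, the sum takes the shape $\sum_Q \langle |f|\rangle_{\mu,Q}^{(\text{a normalized average w.r.t. }\mu)}\cdot\langle |g|\rangle_{\la,Q}^{(\text{a normalized average w.r.t. }\la)}\cdot |Q|\cdot\frac{\mu(Q)^{1/p}\la(Q)^{1/p'}}{|Q|}\cdot\nu(Q)/|Q|\cdot[\ldots]$, and the crucial algebraic point is that $\nu=(\mu/\la)^{1/p}$ makes the weight factors combine into powers of $[\mu]_{A_p,Q}$ and $[\la]_{A_p,Q}$ — this is exactly the balance that Bloom's choice of $\nu$ was designed to produce. One then invokes the sparseness to run the standard stopping-time/maximal-function argument (as in the proof of \eqref{hyt}), controlling $\sum_Q$ by $\|M_\mu f\|_{L^p}\|M_\la g\|_{L^{p'}}$-type quantities and using the $A_p$ constants to pass from $\mu$- and $\la$-averages back to $|f|\,d\mu$ and $|g|\,d\la$.

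The operator ${\mathcal T}^{\star}_{{\mathcal S},b}$ is handled by the same scheme: by duality $\|{\mathcal T}^{\star}_{{\mathcal S},b}f\|_{L^p(\la)}$ equals the supremum of $\sum_Q\big(\frac{1}{|Q|}\int_Q|b-b_Q||f|\big)\,g_Q\,|Q|$ over $g$ with $\|g\|_{L^{p'}(\la)}=1$, and one again extracts $\|b\|_{BMO_\nu}$ and a factor $\nu(Q)/|Q|$, arriving at the same bilinear sparse form with the roles of the averages of $f$ and $g$ essentially interchanged — so the same $A_p$-weighted stopping-time estimate applies. The main obstacle, I expect, is the bookkeeping in the bilinear sparse sum: one must verify that after inserting $\nu=(\mu/\la)^{1/p}$ and using $\langle\cdot\rangle_{Q}$-type Hölder estimates, the exponents on $[\mu]_{A_p}$ and $[\la]_{A_p}$ come out to be exactly $\max(1,\frac{1}{p-1})$ each (matching \eqref{cpp} when $\mu=\la$), and in particular that one does not lose an extra power when summing the sparse family. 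A clean way to organize this is to prove first a ``sparse Bloom inequality'' — a two-weight $L^p$ bound for the bilinear form $\sum_{Q\in{\mathcal S}}\nu(Q)\langle f\rangle_Q^{\mu}\langle g\rangle_Q^{\la}$ — with the stated dependence on $[\mu]_{A_p}$, $[\la]_{A_p}$, and then quote it twice; the remaining work is the routine insertion of $\|b\|_{BMO_\nu}$ via the definition of the weighted $BMO$ norm.
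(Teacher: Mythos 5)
Your reduction to sparse operators via Theorem \ref{commpoint} and your observation that $\nu=(\mu/\la)^{1/p}$ is exactly what makes the $\mu$- and $\la$-factors combine are both correct, and the duality setup matches the paper's (the paper is slightly more economical: since ${\mathcal T}^{\star}_{{\mathcal S},b}$ is the adjoint of ${\mathcal T}_{{\mathcal S},b}$, it only ever bounds ${\mathcal T}^{\star}$, once as $L^p(\mu)\to L^p(\la)$ and once as $L^{p'}(\si_{\la})\to L^{p'}(\si_{\mu})$). But there is a genuine gap at the step you describe as ``an appropriate H\"older/generalized H\"older inequality that peels off the factor $\|b\|_{BMO_\nu}\nu(Q)/|Q|$ times an average of $|g|$.'' The definition of $BMO_{\nu}$ gives only an $L^1$ (Lebesgue) control of $|b-b_Q|$ on $Q$, normalized by $\nu(Q)$; it does not let you separate $|b-b_Q|$ from a non-constant factor $|g|\,d\la$ (or $|f|$) inside $\int_Q|b-b_Q||g|\,d\la$. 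In the unweighted case this separation is done via John--Nirenberg and produces an $L\log L$ average as in \eqref{bmoest}; for $BMO_{\nu}$ no such exponential self-improvement with the right quantitative constants is available off the shelf, and this is precisely the obstruction that makes Bloom-type theorems nontrivial. Your proposal contains no substitute for this step, and the ``main obstacle'' you anticipate (exponent bookkeeping in a bilinear sparse sum) is not where the difficulty actually lies.

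The paper's way around this is Lemma \ref{augm}: the original sparse family is augmented to a sparse family $\widetilde{\mathcal S}$ on which one has the pointwise bound $|b(x)-b_Q|\le c_n\sum_{R\in\widetilde{\mathcal S},\,R\subseteq Q}\Omega(b;R)\chi_R(x)$. Only after this does the $BMO_{\nu}$ hypothesis enter, through $\Omega(b;R)\le\|b\|_{BMO_{\nu}}\nu(R)/|R|$, yielding ${\mathcal T}^{\star}_{\widetilde{\mathcal S},b}|f|\le c_n\|b\|_{BMO_{\nu}}\,{\mathcal A}_{\widetilde{\mathcal S}}\big(({\mathcal A}_{\widetilde{\mathcal S}}|f|)\nu\big)$, i.e.\ a \emph{composition} of two positive sparse operators with the weight $\nu$ inserted between them. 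The conclusion then follows by applying the one-weight bound \eqref{asp} twice, once in $L^p(\la)$ and once in $L^p(\mu)$, using $\la\nu^p=\mu$; this composition structure is also why the final constant is the product $([\mu]_{A_p}[\la]_{A_p})^{\max(1,\frac{1}{p-1})}$ of two full sparse norms, rather than what a single bilinear form $\sum_Q\nu(Q)\langle f\rangle_Q\langle g\rangle_Q$ would produce. To complete your argument you would need to either prove an analogue of Lemma \ref{augm} or establish a quantitative weighted John--Nirenberg inequality for $BMO_{\nu}$; as written, the proposal does not close.
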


\vskip 5mm
The paper is organized as follows. In Section 2, we collect some preliminary information about dyadic lattices, sparse families and Young functions.
Section 3 is devoted to the proof of Theorem \ref{commpoint}. In Section~4, we prove Theorem \ref{weakcomm} and Corollary \ref{imprdep}, and Section 5
contains the proof of Theorem \ref{bhlw}.

\section{Preliminaries}
\subsection{Dyadic lattices and sparse families}
By a cube in ${\mathbb R}^n$ we mean a half-open cube $Q=\prod_{i=1}^n[a_i,a_i+h), h>0$. Denote by $\ell_Q$ the sidelength of $Q$.
Given a cube $Q_0\subset {\mathbb R}^n$, let ${\mathcal D}(Q_0)$ denote the set of all dyadic cubes with respect to $Q_0$, that is, the cubes
obtained by repeated subdivision of $Q_0$ and each of its descendants into $2^n$ congruent subcubes.

A dyadic lattice ${\mathscr D}$ in ${\mathbb R}^n$ is any collection of cubes such that
\begin{enumerate}
\renewcommand{\labelenumi}{(\roman{enumi})}
\item
if $Q\in{\mathscr D}$, then each child of $Q$ is in ${\mathscr D}$ as well;
\item
every 2 cubes $Q',Q''\in {\mathscr D}$ have a common ancestor, i.e., there exists $Q\in{\mathscr D}$ such that $Q',Q''\in {\mathcal D}(Q)$;
\item
for every compact set $K\subset {\mathbb R}^n$, there exists a cube $Q\in {\mathscr D}$ containing $K$.
\end{enumerate}

For this definition, as well as for the next Theorem, we refer to \cite{LN}.

\begin{theorem}\label{three}{\rm{(The Three Lattice Theorem)}}
For every dyadic lattice ${\mathscr D}$, there exist $3^n$ dyadic lattices ${\mathscr D}^{(1)},\dots,{\mathscr D}^{(3^n)}$ such that
$$\{3Q: Q\in{\mathscr D}\}=\cup_{j=1}^{3^n}{\mathscr D}^{(j)}$$
and for every cube $Q\in {\mathscr D}$ and $j=1,\dots,3^n$, there exists a unique cube $R\in {\mathscr D}^{(j)}$ of
sidelength $\ell_{R}=3\ell_Q$ containing $Q$.
\end{theorem}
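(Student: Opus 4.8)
The plan is to build the $3^n$ lattices by a one-third shift of $\mathscr{D}$, and then to read off the three assertions from the combinatorics of dyadic cubes.

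First I would organize $\mathscr{D}$. Fixing a reference cube $Q_0\in\mathscr{D}$ and using the nested-or-disjoint property of dyadic cubes together with axiom (ii), I attach to every $Q\in\mathscr{D}$ a generation $\mathrm{gen}(Q)\in\mathbb{Z}$ with $\ell_Q=2^{-\mathrm{gen}(Q)}\ell_{Q_0}$; axioms (i) and (iii) then give that for every $k$ the generation-$k$ cubes of $\mathscr{D}$ tile $\mathbb{R}^n$, and that $\mathscr{D}$ carries a \emph{spine}: a bi-infinite nested chain $\dots\subset S_{k+1}\subset S_k\subset S_{k-1}\subset\dots$ with $\mathrm{gen}(S_k)=k$ and $\bigcup_k S_k=\mathbb{R}^n$. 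The spine is allowed to drift, and this drift is exactly what will save axiom (iii) for the constructed lattices.

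Next, the construction. For a phase vector $\vec r\in\{0,1,2\}^n$ I would build a family $\mathscr{D}^{(\vec r)}$ by specifying, for each generation $k$, a tiling of $\mathbb{R}^n$ whose cells are $3\times\cdots\times3$ blocks of generation-$k$ cubes of $\mathscr{D}$, the tiling of generation $k+1$ being the collection of dyadic children of the cells of generation $k$. The compatibility between consecutive generations is where the arithmetic of $\tfrac13$ enters: the two halves, in each coordinate, of a $3$-block are again $3$-blocks but shifted, and bookkeeping of these shifts shows that a globally consistent bi-infinite family of such tilings exists and that there are precisely $3^n$ of them — an avatar of the fact that $\tfrac13$ has a period-two binary expansion. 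Each cell of $\mathscr{D}^{(\vec r)}$ is by construction of the form $3Q$, $Q\in\mathscr{D}$. Conversely, given $Q\in\mathscr{D}$ of generation $k$, in each coordinate $Q$ is the left, middle, or right cube of the $3$-block containing it, and as the corresponding component of $\vec r$ runs over $\{0,1,2\}$ this position runs over all three values; hence for every $\vec r$ there is exactly one cell of $\mathscr{D}^{(\vec r)}$ containing $Q$, it has sidelength $3\ell_Q$, and $3Q$ itself is a cell of $\mathscr{D}^{(\vec r)}$ for precisely the $\vec r$ that makes $Q$ the center of its block in all coordinates. This gives both $\{3Q:Q\in\mathscr{D}\}=\bigcup_{\vec r}\mathscr{D}^{(\vec r)}$ and the existence and uniqueness of the triple-size cube of $\mathscr{D}^{(\vec r)}$ containing $Q$.

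Finally I would verify that each $\mathscr{D}^{(\vec r)}$ is a dyadic lattice. Axiom (i) is built into the construction, since children of cells are cells. For axiom (iii) I follow the spine: the cell $T_k$ of generation $k$ containing $S_k$ satisfies $T_{k+1}\subset T_k$, because $S_{k+1}\subset S_k\subset T_k$; thus the $T_k$ form a nested chain of cells whose union contains $\bigcup_k S_k=\mathbb{R}^n$, and since $T_k$ grows without bound as $k\to-\infty$, any compact set lies in some $T_k$. Axiom (ii) follows, any two cells sitting inside a common $T_k$ high enough up. I expect the genuinely delicate step to be the compatibility in the construction of $\mathscr{D}^{(\vec r)}$: one must control, across all generations simultaneously, how the $3$-block structure transforms under the dyadic-child map, confirm that consistent bi-infinite families of such tilings exist and are exactly $3^n$ in number, and at the same time choose the spine of $\mathscr{D}^{(\vec r)}$ so that axioms (ii) and (iii) survive. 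The interplay between the period-two arithmetic of the one-third shift and the drift of the spine is the crux.
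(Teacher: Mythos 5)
Your construction is sound, and it is essentially the argument of Lerner--Nazarov \cite{LN}, to which the paper defers for this theorem without reproducing a proof; so there is nothing to compare against inside the paper itself, only against \cite{LN}. The one step you explicitly leave open --- that globally consistent bi-infinite families of $3$-block tilings exist and number exactly $3^n$ --- does go through, and is less delicate than you fear once you separate the two directions of propagation (it suffices to work one coordinate at a time). Downward is forced: each half, in a given coordinate, of a block of three consecutive generation-$k$ intervals is a block of three consecutive generation-$(k+1)$ intervals, so the children of the cells of a generation-$k$ tiling automatically form a generation-$(k+1)$ tiling. Upward is also forced: a cell $C$ of generation $k$ has exactly two candidate parents, namely $C\cup\big(C+3\cdot 2^{-k}\ell_{Q_0}\big)$ and $\big(C-3\cdot 2^{-k}\ell_{Q_0}\big)\cup C$; their left endpoints are both generation-$k$ gridpoints differing by an odd multiple of the generation-$k$ mesh, so exactly one of them lies on the generation-$(k-1)$ grid of $\mathscr{D}$, and the chosen parents are then seen to tile $\mathbb{R}$, to be $3$-blocks of generation $k-1$, and to have precisely the generation-$k$ cells as children. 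Hence the whole family is determined by its phase at generation $0$, each of the $3^n$ phases is admissible, and this parity dichotomy is exactly your ``period-two binary expansion of $1/3$.'' With that in hand, your identification of the cells with the cubes $3Q$, your count of which $\vec r$ makes $3Q$ itself a cell, and your verification of axioms (i)--(iii) via the spine $\{S_k\}$ (noting that $T_{k+1}$ must be a dyadic child of $T_k$ because the generation-$(k+1)$ cells tiling $T_k$ are its children) are all correct; the only point worth writing out there is that $\bigcup_k S_k=\mathbb{R}^n$, which follows by applying axiom (iii) to $\overline{S_{k_0}}\cup B$ for balls $B$ exhausting $\mathbb{R}^n$ and observing that the resulting cube contains $x_0$ and is therefore itself a spine cube.
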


\begin{remark}\label{use}
Fix a dyadic lattice ${\mathscr D}$. For an arbitrary cube $Q\subset{\mathbb R}^n$, there is a cube $Q'\in {\mathscr D}$ such that
$\ell_Q/2<\ell_{Q'}\le \ell_Q$ and $Q\subset 3Q'$. By Theorem \ref{three}, there is $j=1,\dots,3^n$ such that $3Q'=P\in {\mathscr D}^{(j)}$.
Therefore, for every cube $Q\subset{\mathbb R}^n$, there exists $P\in {\mathscr D}^{(j)},j=1,\dots,3^n,$ such that $Q\subset P$ and $\ell_P\le 3\ell_Q$.
A similar statement can be found in \cite[Lemma 2.5]{HLP}.
\end{remark}

We say that a family ${\mathcal S}$ of cubes from ${\mathscr D}$ is $\eta$-sparse, $0<\eta<1$, if for every
$Q\in {\mathcal S}$, there exists a measurable set $E_Q\subset Q$ such that $|E_Q|\ge \eta|Q|$, and the sets
$\{E_Q\}_{Q\in {\mathcal S}}$ are pairwise disjoint.

A family ${\mathcal S}\subset {\mathscr D}$ is called $\Lambda$-Carleson, $\Lambda>1$, if for every cube $Q\in {\mathscr D}$,
$$\sum_{P\in {\mathcal S}, P\subset Q}|P|\le\Lambda|Q|.$$

It is easy to see that every $\eta$-sparse family is $(1/\eta)$-Carleson. In \cite[Lemma 6.3]{LN}, it is shown that the converse statement is
also true, namely, every $\Lambda$-Carleson family is $(1/\Lambda)$-sparse. Also, \cite[Lemma~6.6]{LN} says that if ${\mathcal S}$ is $\Lambda$-Carleson
and $m\in {\mathbb N}$ such that $m\ge 2$, then ${\mathcal S}$ can be written as a union of $m$ families ${\mathcal S}_j$, each of which is $(1+\frac{\Lambda-1}{m})$-Carleson.
Using the above mentioned relation between sparse and Carleson families, one can rewrite the latter fact as follows.

\begin{lemma}\label{unsparse}
If ${\mathcal S}\subset {\mathscr D}$ is $\eta$-sparse and $m\ge 2$, then one can represent ${\mathcal S}$ as a disjoint union ${\mathcal S}=\cup_{j=1}^m{\mathcal S}_j$, where
each family ${\mathcal S}_j$ is $\frac{m}{m+(1/\eta)-1}$-sparse.
\end{lemma}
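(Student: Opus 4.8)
The plan is to deduce this immediately from the equivalence between sparse and Carleson families, performing the actual splitting on the Carleson side where it is transparent. This is exactly the route indicated in the paragraph preceding the statement: the Carleson condition $\sum_{P\in{\mathcal S},P\subset Q}|P|\le\Lambda|Q|$ is linear in $\Lambda$ and additive in ${\mathcal S}$, so it behaves well under partitioning, whereas the sparseness condition is phrased in terms of the chosen sets $E_Q$ and is clumsier to manipulate directly.

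First I would observe that, since ${\mathcal S}$ is $\eta$-sparse, it is $(1/\eta)$-Carleson; write $\Lambda=1/\eta>1$. Next, apply \cite[Lemma 6.6]{LN} with this $\Lambda$ and the given integer $m\ge 2$: this yields a decomposition ${\mathcal S}=\bigcup_{j=1}^m{\mathcal S}_j$ into pairwise disjoint subfamilies, each of which is $\bigl(1+\tfrac{\Lambda-1}{m}\bigr)$-Carleson. Since $1+\tfrac{\Lambda-1}{m}=\tfrac{m+(1/\eta)-1}{m}>1$, the converse implication \cite[Lemma 6.3]{LN} applies to each ${\mathcal S}_j$ and shows that it is $\tfrac{m}{m+(1/\eta)-1}$-sparse, namely the reciprocal of its Carleson constant. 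One checks that $1/\eta>1$ forces this number into $(0,1)$, as required of a sparseness constant; this gives the desired conclusion.

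There is no real obstacle here: the content is entirely in the two cited facts from \cite{LN}, and the only routine points to verify are the elementary identity $1+\tfrac{(1/\eta)-1}{m}=\tfrac{m+(1/\eta)-1}{m}$ and that the decomposition supplied by \cite[Lemma 6.6]{LN} is genuinely a disjoint union. If one preferred a self-contained argument avoiding the Carleson language altogether, one could instead mimic the stopping-time/splitting argument behind \cite[Lemma 6.6]{LN} directly for sparse families, but routing through the equivalence is shorter and avoids repeating that argument.
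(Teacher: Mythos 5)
Your proposal is correct and is exactly the paper's own argument: the lemma is stated there precisely as a reformulation of \cite[Lemma 6.6]{LN} via the sparse--Carleson equivalence from \cite[Lemma 6.3]{LN}, with the same constant bookkeeping $1+\frac{(1/\eta)-1}{m}=\frac{m+(1/\eta)-1}{m}$. Nothing to add.
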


Now we turn our attention to augmentation. Given a family of cubes~$\mathcal{S}$ contained in a dyadic lattice $\mathscr{D}$,  we associate to each cube $Q\in\mathcal{S}$ a family $\mathcal{F}(Q)\subseteq\mathcal{D}(Q)$ such that $Q\in\mathcal{F}(Q)$. In some situations it is useful to construct a new family that combines the families $\mathcal{F}(Q)$ and~$\mathcal{S}$. One way to build such a family is the following.

For each $\mathcal{F}(Q)$ let $\widetilde{\mathcal{F}}(Q)$ be the family that consists of all cubes $P\in\mathcal{F}(Q)$ that are not contained in any cube $R\in\mathcal{S}$ with $R\subsetneq Q$. Now we can define the augmented family $\widetilde{\mathcal{S}}$ as $$\widetilde{\mathcal{S}}=\bigcup_{Q\in\mathcal{S}}\widetilde{\mathcal{F}}(Q).$$ It is clear, by construction, that the augmented family  $\widetilde{\mathcal{S}}$ contains the original family $\mathcal{S}$. Furthermore, if  $\mathcal{S}$ and each $\mathcal{F}(Q)$ are sparse families, then the augmented family $\widetilde{\mathcal{S}}$ is also sparse. We state this fact more clearly in the following lemma
(see \cite[Lemma 6.7]{LN} and the above equivalence between the notions of the $\Lambda$-Carleson and $\frac{1}{\Lambda}$-sparse families).

\begin{lemma}\label{Lem:Aug}
If $\mathcal{S}\subset\mathscr{D}$ is an $\eta_0$-sparse family then  the augmented family $\widetilde{\mathcal{S}}$ built upon $\eta$-sparse families ${\mathcal{F}}(Q),Q\in {\mathcal S},$ is an $\frac{\eta\eta_0}{1+\eta_0}$-sparse family.
\end{lemma}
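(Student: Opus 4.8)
The plan is to rephrase everything in the dual language of Carleson families, prove the required Carleson bound for $\widetilde{\mathcal S}$ by hand, and then translate back. By the equivalence recalled above, $\mathcal S$ being $\eta_0$-sparse means it is $\frac1{\eta_0}$-Carleson; and each $\mathcal F(Q)$ being $\eta$-sparse supplies pairwise disjoint sets $E_P\subseteq P$ with $|E_P|\ge\eta|P|$ for $P\in\mathcal F(Q)$, so that $\sum_{P\in\mathcal F(Q),\,P\subseteq R}|P|\le\frac1{\eta}|R|$ for every cube $R$. Hence it suffices to check that $\widetilde{\mathcal S}$ is $\frac{1+\eta_0}{\eta\eta_0}$-Carleson, that is, $\sum_{P\in\widetilde{\mathcal S},\,P\subseteq Q_0}|P|\le\frac{1+\eta_0}{\eta\eta_0}\,|Q_0|$ for every $Q_0\in\mathscr D$; the lemma then follows from \cite[Lemma 6.3]{LN}.

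Fix $Q_0\in\mathscr D$. Every $P\in\widetilde{\mathcal S}$ lies in $\widetilde{\mathcal F}(Q)$ for some $Q\in\mathcal S$, so $\sum_{P\in\widetilde{\mathcal S},\,P\subseteq Q_0}|P|\le\sum_{Q\in\mathcal S}\sum_{P\in\widetilde{\mathcal F}(Q),\,P\subseteq Q_0}|P|$, and I would split the outer sum according to the position of $Q$ relative to $Q_0$ (two cubes of a dyadic lattice are nested or disjoint). If $Q\cap Q_0=\emptyset$, the inner sum is empty. If $Q\subseteq Q_0$, then every $P\in\widetilde{\mathcal F}(Q)\subseteq\mathcal F(Q)$ already satisfies $P\subseteq Q\subseteq Q_0$, so the inner sum is at most $\sum_{P\in\mathcal F(Q)}|P|\le\frac1{\eta}|Q|$; summing over all such $Q$ and invoking the $\frac1{\eta_0}$-Carleson property of $\mathcal S$ bounds this part of the sum by $\frac1{\eta}\cdot\frac1{\eta_0}|Q_0|$.

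The remaining case $Q\supsetneq Q_0$ is where the structure of the augmentation is actually used, and it is the main point of the argument. The cubes $Q\in\mathcal S$ with $Q\supsetneq Q_0$ form a chain (they all contain $Q_0$); if this chain is empty there is nothing more to estimate, otherwise let $Q^\ast$ be its smallest element. For any $Q$ in the chain with $Q\neq Q^\ast$ we have $Q^\ast\subsetneq Q$ with $Q^\ast\in\mathcal S$, so a cube $P\in\widetilde{\mathcal F}(Q)$ with $P\subseteq Q_0\subseteq Q^\ast$ would be contained in a strictly smaller $\mathcal S$-cube than $Q$, contradicting the definition of $\widetilde{\mathcal F}(Q)$; hence only $Q^\ast$ contributes. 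For $Q^\ast$ itself, the cubes $P\in\widetilde{\mathcal F}(Q^\ast)$ with $P\subseteq Q_0$ come with pairwise disjoint sets $E_P\subseteq P\subseteq Q_0$ satisfying $|E_P|\ge\eta|P|$, whence $\sum_{P\in\widetilde{\mathcal F}(Q^\ast),\,P\subseteq Q_0}|P|\le\frac1{\eta}|Q_0|$. Combining with the previous estimate yields $\sum_{P\in\widetilde{\mathcal S},\,P\subseteq Q_0}|P|\le\frac1{\eta}|Q_0|+\frac1{\eta\eta_0}|Q_0|=\frac{1+\eta_0}{\eta\eta_0}|Q_0|$, the desired Carleson bound.

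The only genuinely nontrivial step is the minimality argument in the last case, which is exactly what prevents the ancestors of $Q_0$ in $\mathcal S$ from accumulating; everything else is routine bookkeeping with the disjoint sets $E_P$. Once one has passed to Carleson families this computation is literally \cite[Lemma 6.7]{LN}, so that lemma may be quoted instead of repeating the estimate.
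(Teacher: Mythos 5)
Your proof is correct, and it follows exactly the route the paper intends: the paper proves nothing here, simply invoking \cite[Lemma 6.7]{LN} together with the Carleson/sparse equivalence, and your argument is a correct self-contained proof of that Carleson bound (the case split on the position of $Q$ relative to $Q_0$, with the minimality of $Q^\ast$ handling the ancestors, is precisely the point, and the constant $\frac{1+\eta_0}{\eta\eta_0}$ matches). Nothing to object to.
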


\subsection{Young functions and normalized Luxemburg norms}
By a Young function we mean a continuous,
convex, strictly increasing function $\f:[0,\infty)\to [0,\infty)$
with $\f(0)=0$ and $\f(t)/t\to \infty$ as $t\to \infty$.
Notice that such functions are also called in the literature the $N$-functions.
We refer to \cite{KR,RR} for their basic properties. We will use, in particular, that
$\f(t)/t$ is also a strictly increasing function (see, e.g., \cite[p. 8]{KR}).

We will also use the fact that
\begin{equation}\label{fact}
\|f\|_{\f,Q}\le 1\Leftrightarrow \frac{1}{|Q|}\int_Q\f(|f(x)|)dx\le 1.
\end{equation}

Given a Young function $\f$, its complementary function is defined by
$$\bar\f(t)=\sup_{x\ge 0}\big(xt-\f(x)\big).$$
Then $\bar\f$ is also a Young function satisfying $t\le \bar\f^{-1}(t)\f^{-1}(t)\le 2t$.
Also the following H\"older type estimate holds:
\begin{equation}\label{prop2}
\frac{1}{|Q|}\int_Q|f(x)g(x)|dx\le 2\|f\|_{\f,Q}\|g\|_{\bar\f,Q}.
\end{equation}

Recall that the John-Nirenberg inequality (see, e.g., \cite[p. 124]{G2}) says that for every $b\in BMO$
and for any cube $Q\subset {\mathbb R}^n$,
\begin{equation}\label{jn}
|\{x\in Q:|b(x)-b_Q|>\la\}|\le e|Q|e^{-\frac{\la}{2^ne\|b\|_{BMO}}}\quad(\la>0).
\end{equation}
In particular, this inequality implies (see \cite[p. 128]{G2})
$$
\frac{1}{|Q|}\int_Qe^{\frac{|b(x)-b_Q|}{c_n\|b\|_{BMO}}-1}dx\le 1.
$$
From this and from (\ref{fact}), taking $\f(t)=e^t-1$, we obtain
$$\|b-b_Q\|_{\f,Q}\le c_n\|b\|_{BMO}.$$
A simple computation shows that in this case $\bar\f(t)\approx t\log({\rm e}+t)$,
and therefore, by (\ref{prop2}),
\begin{equation}\label{bmoest}
\frac{1}{|Q|}\int_Q|(b-b_Q)g|dx\le c_n\|b\|_{BMO}\|g\|_{L\log L,Q}.
\end{equation}

Notice that many important properties of the Luxemburg normalized norms
$\|f\|_{\f,Q}$ hold without assuming the convexity of $\f$.
In particular, we will use the following generalized H\"older inequality.

\begin{lemma}\label{holder} Let $A,B$ and $C$ be non-negative, continuous, strictly increasing functions on $[0,\infty)$
satisfying $A^{-1}(t)B^{-1}(t)\le C^{-1}(t)$ for all $t\ge 0$. Assume also that $C$ is convex. Then
\begin{equation}\label{prop3}
\|fg\|_{C,Q}\le 2\|f\|_{A,Q}\|g\|_{B,Q}.
\end{equation}
\end{lemma}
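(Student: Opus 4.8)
The plan is to reduce the generalized H\"older inequality \eqref{prop3} to a pointwise inequality between the integrands, exactly as in the classical convex case leading to \eqref{prop2}. First I would dispose of the trivial degeneracies: if $\|f\|_{A,Q}=0$ or $\|g\|_{B,Q}=0$, or if either is infinite, the inequality holds vacuously, so I may assume $0<\|f\|_{A,Q},\|g\|_{B,Q}<\infty$. By homogeneity of the Luxemburg norm in its defining scalar (replacing $f$ by $f/\|f\|_{A,Q}$ and $g$ by $g/\|g\|_{B,Q}$), it suffices to prove that if $\|f\|_{A,Q}\le 1$ and $\|g\|_{B,Q}\le 1$, then $\|fg\|_{C,Q}\le 2$. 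Using \eqref{fact} — which the excerpt stresses holds without convexity — the hypotheses give $\frac{1}{|Q|}\int_Q A(|f|)\le 1$ and $\frac{1}{|Q|}\int_Q B(|g|)\le 1$, and by the same token $\|fg\|_{C,Q}\le 2$ will follow once I show $\frac{1}{|Q|}\int_Q C(|fg|/2)\le 1$.

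The heart of the matter is the pointwise estimate
\begin{equation*}
C\!\left(\frac{st}{2}\right)\le A(s)+B(t)\qquad(s,t\ge 0),
\end{equation*}
which I would derive from the hypothesis $A^{-1}(u)B^{-1}(u)\le C^{-1}(u)$ together with convexity of $C$. The argument: fix $s,t\ge 0$ and set $u=A(s)$, $v=B(t)$ (here I use that $A,B$ are strictly increasing and continuous with $A(0)=B(0)=0$, so $A^{-1},B^{-1}$ are well-defined on the relevant range; the degenerate cases $s=0$ or $t=0$ are immediate since $C(0)=0$). Then $s=A^{-1}(u)$ and $t=B^{-1}(v)$, so $st=A^{-1}(u)B^{-1}(v)$. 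Assuming without loss of generality $u\le v$ (otherwise swap the roles), monotonicity of $A^{-1}$ gives $A^{-1}(u)\le A^{-1}(v)$, hence $st\le A^{-1}(v)B^{-1}(v)\le C^{-1}(v)$ by hypothesis, i.e. $C(st)\le v=B(t)\le A(s)+B(t)$. Since $C$ is convex with $C(0)=0$, the function $C$ satisfies $C(r/2)\le \tfrac12 C(r)\le C(r)$ for $r\ge 0$, so a fortiori $C(st/2)\le A(s)+B(t)$. (In fact convexity gives the slightly stronger $C(st/2)\le \tfrac12(A(s)+B(t))$, but the weaker form is all that is needed and avoids worrying about whether one prefers the factor $2$ outside or inside.)

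With the pointwise bound in hand, I would integrate: applying it with $s=|f(x)|$, $t=|g(x)|$ and averaging over $Q$,
\begin{equation*}
\frac{1}{|Q|}\int_Q C\!\left(\frac{|f(x)g(x)|}{2}\right)dx\le \frac{1}{|Q|}\int_Q A(|f(x)|)\,dx+\frac{1}{|Q|}\int_Q B(|g(x)|)\,dx\le 1+1=2.
\end{equation*}
This does not yet give $\le 1$, so the final step is to absorb the constant: by convexity of $C$ and $C(0)=0$ we have $C(\lambda r)\le \lambda C(r)$ for $0\le\lambda\le 1$, hence $\frac{1}{|Q|}\int_Q C\big(\tfrac{|fg|}{4}\big)\le \tfrac12\cdot 2=1$, which by \eqref{fact} means $\|fg\|_{C,Q}\le 4$ — a factor $4$, not $2$. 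To recover the sharp constant $2$ I would instead use the stronger pointwise inequality $C(st/2)\le\tfrac12(A(s)+B(t))$ noted above, whose average is directly $\le\tfrac12(1+1)=1$, giving $\|fg\|_{C,Q}\le 2$ via \eqref{fact}. The only genuinely delicate point — the ``main obstacle'' — is making sure the convexity of $C$ is used correctly to pass from $A^{-1}(u)B^{-1}(u)\le C^{-1}(u)$ to a clean subadditive pointwise bound with the right constant; everything else is the standard Young-function bookkeeping, and one should take care that $A$ and $B$ are \emph{not} assumed convex, so the only place convexity enters is through $C$ (to justify both the pointwise step and the final homogeneity absorption), which is exactly what the lemma's hypotheses permit.
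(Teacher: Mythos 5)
Your proof is correct and follows essentially the same route as the paper's: normalize so that $\|f\|_{A,Q},\|g\|_{B,Q}\le 1$, derive the pointwise bound $C(st)\le A(s)+B(t)$ from the hypothesis on the inverses, use convexity of $C$ to get $C(st/2)\le\tfrac12(A(s)+B(t))$, and conclude via \eqref{fact}. The paper merely asserts that the assumptions ``easily imply'' $C(xy)\le A(x)+B(y)$, whereas you spell out that step (the WLOG $u\le v$ argument), which is exactly the intended justification.
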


This lemma was proved by R. O'Neil \cite{ON} under the assumption that $A,B$ and $C$ are Young functions but the same proof works under the above conditions.
Indeed, by homogeneity, it suffices to assume that $\|f\|_{A,Q}=\|g\|_{B,Q}=1$. Next, notice that the assumptions on $A,B$ and $C$ easily imply that
$C(xy)\le A(x)+B(y)$ for all $x,y\ge 0$. Therefore, using the convexity of $C$ and (\ref{fact}), we obtain
$$\frac{1}{|Q|}\int_QC(|fg|/2)dx\le \frac{1}{2}\Big(\frac{1}{|Q|}\int_QA(|f|)dx+\frac{1}{|Q|}\int_QB(|g|)dx\Big)\le 1,$$
which, by (\ref{fact}) again, implies (\ref{prop3}).

Given a dyadic lattice ${\mathscr D}$, denote
$$M_{\Phi}^{{\mathscr D}}f(x)=\sup_{Q\ni x, Q\in {\mathscr D}}\|f\|_{\Phi,Q}.$$
The following lemma is a generalization of the Fefferman-Stein inequality (\ref{fs}) to general Orlicz maximal functions,
and it is apparently well-known. We give its proof for the sake of completeness.

\begin{lemma} \label{LemmaAux} Let $\Phi$ be a Young function. For an arbitrary weight $w$,
$$
w\left\{ x\in\mathbb{R}^{n}\,:\,M_{\Phi}f(x)>\lambda\right\}\le 3^n\int_{{\mathbb R}^n}\Phi\left(\frac{9^n|f(x)|}{\lambda}\right)Mw(x)dx.
$$
\end{lemma}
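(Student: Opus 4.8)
The plan is to reduce the weighted weak-type bound for the Orlicz maximal operator $M_\Phi$ to the corresponding bound for its dyadic analogue $M_\Phi^{\mathscr D}$ via the Three Lattice Theorem, and then prove the dyadic bound by a Calder\'on--Zygmund-type stopping-time argument. First I would invoke Remark~\ref{use}: for every cube $Q\subset{\mathbb R}^n$ there is a dyadic cube $P\in{\mathscr D}^{(j)}$ for some $j\in\{1,\dots,3^n\}$ with $Q\subset P$ and $\ell_P\le 3\ell_Q$, hence $|P|\le 3^n|Q|$. Since $\|f\|_{\Phi,Q}\le \|f\cdot(|P|/|Q|)\|_{\Phi,P}$ — more precisely, from the definition of the Luxemburg norm and monotonicity of $\Phi$ one gets $\|f\|_{\Phi,Q}\le \|f\|_{\Phi,P}\cdot\frac{|P|}{|Q|}$ is false in general, so instead I would use that $\frac{1}{|Q|}\int_Q\Phi(|f|/\lambda)\le \frac{|P|}{|Q|}\cdot\frac{1}{|P|}\int_P\Phi(|f|/\lambda)\le 3^n\frac{1}{|P|}\int_P\Phi(|f|/\lambda)$, so $\|f\|_{\Phi,Q}\le 3^n\|f\|_{\Phi,P}$ by convexity of $\Phi$ (as $\Phi(t/3^n)\le\Phi(t)/3^n$ is not quite right either; rather $\|f\|_{\Phi,Q}\le\|f\|_{\Phi,P}$ up to the factor coming from rescaling $\lambda$). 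The clean statement is: $M_\Phi f(x)\le 3^n\max_j M_\Phi^{{\mathscr D}^{(j)}}f(x)$ in the appropriate normalized sense, which will produce the constant $9^n$ inside $\Phi$ after one is careful with the homogeneity.

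Next I would prove the dyadic estimate: for a single dyadic lattice ${\mathscr D}$ and any weight $w$,
\begin{equation*}
w\{x:M_\Phi^{\mathscr D}f(x)>\lambda\}\le \int_{{\mathbb R}^n}\Phi(|f(x)|/\lambda)\,Mw(x)\,dx.
\end{equation*}
To do this, fix $\lambda>0$ and let $\{Q_k\}$ be the maximal dyadic cubes (within each cube from condition (iii) of the lattice definition, then pass to a countable exhausting union) for which $\|f\|_{\Phi,Q_k}>\lambda$, equivalently $\frac{1}{|Q_k|}\int_{Q_k}\Phi(|f|/\lambda)\,dy>1$, i.e. $|Q_k|<\int_{Q_k}\Phi(|f|/\lambda)\,dy$. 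These cubes are pairwise disjoint and $\{M_\Phi^{\mathscr D}f>\lambda\}=\bigcup_k Q_k$ up to a null set. Then
\begin{equation*}
w\Big(\bigcup_k Q_k\Big)=\sum_k w(Q_k)=\sum_k \frac{w(Q_k)}{|Q_k|}|Q_k|\le \sum_k\Big(\inf_{Q_k}Mw\Big)\int_{Q_k}\Phi(|f|/\lambda)\,dy\le\sum_k\int_{Q_k}\Phi(|f|/\lambda)\,Mw\,dy,
\end{equation*}
using that $\frac{w(Q_k)}{|Q_k|}\le Mw(x)$ for every $x\in Q_k$ and that the cubes are disjoint; summing gives the integral over $\bigcup_k Q_k\subset{\mathbb R}^n$, which is bounded by the integral over all of ${\mathbb R}^n$.

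Finally I would combine the two steps. From $M_\Phi f(x)\le 3^n\max_{1\le j\le 3^n}M_\Phi^{{\mathscr D}^{(j)}}(f)(x)$ (interpreted as: if $M_\Phi f(x)>\lambda$ then $\|f\|_{\Phi,Q}>\lambda$ for some cube $Q$, hence $\|f\|_{\Phi,P}>\lambda/3^n$ for the associated dyadic cube $P$, i.e. $\|3^n f\|_{\Phi,P}>\lambda$ after absorbing the factor into $f$ via homogeneity, which changes $\Phi(|f|/\lambda)$ into $\Phi(3^n|f|/\lambda)$), one has
\begin{equation*}
\{M_\Phi f>\lambda\}\subset\bigcup_{j=1}^{3^n}\{M_\Phi^{{\mathscr D}^{(j)}}f>\lambda/3^n\},
\end{equation*}
so summing the $3^n$ dyadic estimates with $\lambda$ replaced by $\lambda/3^n$ yields the bound with $3^n$ in front and $\Phi(3^n\cdot 3^n|f|/\lambda)=\Phi(9^n|f|/\lambda)$ inside, matching the statement. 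The main obstacle I anticipate is purely bookkeeping: tracking how the rescaling constant from Remark~\ref{use} ($\ell_P\le 3\ell_Q$, so $|P|/|Q|\le 3^n$) interacts with the normalization in the Luxemburg norm and with the subadditivity/convexity of $\Phi$, so that the constants come out exactly as $3^n$ outside and $9^n$ inside rather than some larger power; using convexity in the form $\Phi(t)/t$ increasing (stated in the preliminaries) is the right tool to keep these sharp.
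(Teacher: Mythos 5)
Your proof is correct and follows essentially the same route as the paper: a stopping-time (Calder\'on--Zygmund) selection of maximal dyadic cubes with $\|f\|_{\Phi,Q}>\lambda$ combined with $\frac{w(Q)}{|Q|}\le\inf_Q Mw$, and then the reduction $M_\Phi\lesssim 3^n\max_j M_\Phi^{{\mathscr D}^{(j)}}$ via Remark~\ref{use} and the convexity bound $\Phi(t/3^n)\le\Phi(t)/3^n$ (which, contrary to your hedging, is valid since $\Phi(0)=0$). The only slip is in the final bookkeeping: your max-based union bound replaces $\lambda$ by $\lambda/3^n$ and hence yields $\Phi(3^n|f|/\lambda)$ rather than $\Phi(9^n|f|/\lambda)$ inside, which is a \emph{stronger} conclusion than the stated one (the paper's $9^n$ comes from using the sum over the $3^n$ lattices instead of the max), so the lemma follows either way.
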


\begin{proof}
By the Calder\'on-Zygmund decomposition adapted to $M_{\Phi}^{{\mathscr D}}$ (see \cite[p. 237]{CUMP}),
there exists a family of disjoint cubes $\{Q_{i}\}$ such that
$$
\left\{ x\in\mathbb{R}^{n}\,:\,M_{\Phi}^{\mathscr{D}}f(x)>\lambda\right\} =\cup_{i}Q_{i}
$$
and $\lambda<\|f\|_{\Phi,Q_{i}}\le 2^{n}\lambda.$
By (\ref{fact}), we see that $\|f\|_{\Phi,Q_{i}}>\la$ implies $\int_{Q_i}\Phi(|f|/\la)>|Q_i|$. Therefore,
\begin{eqnarray*}
&&w\{x\in\mathbb{R}^{n}:M_{\Phi}^{\mathscr{D}}f(x)>\lambda\}=\sum_iw(Q_i)\\
&&<\sum_iw_{Q_i}\int_{Q_i}\Phi(|f(x)|/\la)dx\le\int_{{\mathbb R}^n}\Phi(|f(x)|/\la)Mw(x)dx.
\end{eqnarray*}

Now we observe that by the convexity of $\Phi$ and Remark \ref{use}, there exist $3^n$ dyadic lattices ${\mathscr D}^{(j)}$ such that
$$
M_{\Phi}f(x)\le 3^n\sum_{j=1}^{3^{n}}M_{\Phi}^{{\mathscr D}^{(j)}}f(x).
$$
Combining this estimate with the previous one completes the proof.
\end{proof}

\begin{remark}\label{llog}
Suppose that $\Phi(t)=t\log({\rm e}+t)$. It is easy to see that for all $a,b\ge 0$,
\begin{equation}\label{subm}
\Phi(ab)\le 2\Phi(a)\Phi(b).
\end{equation}
From this and from Lemma \ref{LemmaAux},
$$
w\{x\in {\mathbb R}^n:M_{L\log L}f(x)>\la\}\le c_n\int_{{\mathbb R}^n}\Phi\left(\frac{|f(x)|}{\lambda}\right)Mw(x)dx.
$$
\end{remark}

\section{Proof of Theorem \ref{commpoint}}
The proof of Theorem~\ref{commpoint} is a slight modification of the argument in \cite{L}.
Although some parts of the proofs here and in \cite{L} are almost identical, certain details are different, and hence we give a complete proof.
We start by defining several important objects.

Let $T$ be an $\omega$-Calder\'on-Zygmund operator with $\omega$ satisfying the Dini condition.
Recall that the maximal truncated operator $T^{\star}$ is defined by
$$T^{\star}f(x)=\sup_{\e>0}\Big|\int_{|y-x|>\e}K(x,y)f(y)dy\Big|.$$

Define the grand maximal truncated operator ${\mathcal M}_T$ by
$${\mathcal M}_Tf(x)=\sup_{Q\ni x}\,\esssup_{\xi\in Q}|T(f\chi_{{\mathbb R}^n\setminus 3Q})(\xi)|,$$
where the supremum is taken over all cubes $Q\subset {\mathbb R}^n$ containing $x$.

Given a cube $Q_0$, for $x\in Q_0$ define a local version of ${\mathcal M}_T$ by
$${\mathcal M}_{T,Q_0}f(x)=\sup_{Q\ni x, Q\subset Q_0}\esssup_{\xi\in Q}|T(f\chi_{3Q_0\setminus 3Q})(\xi)|.$$

The next lemma was proved in \cite{L}.

\begin{lemma}\label{mainl}
The following pointwise estimates hold:
\begin{enumerate}
\renewcommand{\labelenumi}{(\roman{enumi})}
\item for a.e. $x\in Q_0$,
$$
|T(f\chi_{3Q_0})(x)|\le c_{n}\|T\|_{L^1\to L^{1,\infty}}|f(x)|+{\mathcal M}_{T,Q_0}f(x);
$$
\item for all $x\in {\mathbb R}^n$,
$$
{\mathcal M}_Tf(x)\le c_n(\|\omega\|_{\text{\rm{Dini}}}+C_K)Mf(x)+T^{\star}f(x).
$$
\end{enumerate}
\end{lemma}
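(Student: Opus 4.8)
The plan is to prove the two estimates by separate, essentially self-contained arguments; neither uses the other. Statement (ii) is a direct Calder\'on--Zygmund kernel computation: for a fixed cube $Q\ni x$ and $\xi\in Q$ one compares $T(f\chi_{{\mathbb R}^n\setminus 3Q})(\xi)$ with a truncated integral centered at $x$ of radius comparable to $\ell_Q$, controlling the far part by the smoothness of $K$ and the annular part $B(x,\rho)\setminus 3Q$ by the size of $K$. Statement (i) is the ``reverse'' weak-type bound, obtained by combining Kolmogorov's inequality for the $L^1\to L^{1,\infty}$ bound of $T$ (finite by classical theory, $\lesssim_n C_T$) with the Lebesgue differentiation theorem; this is the mechanism used in \cite{L}.

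For (ii): fix $x$, a cube $Q\ni x$ and $\xi\in Q$, and set $\rho=3\sqrt n\,\ell_Q$, so that $3Q\subset B(x,\rho)$ while $|\xi-x|\le\sqrt n\,\ell_Q<\tfrac12\rho$. The far integral $\int_{{\mathbb R}^n\setminus 3Q}K(\xi,y)f(y)\,dy$ is absolutely convergent, and one writes
$$
T(f\chi_{{\mathbb R}^n\setminus 3Q})(\xi)=\int_{|y-x|>\rho}\big(K(\xi,y)-K(x,y)\big)f(y)\,dy+\int_{|y-x|>\rho}K(x,y)f(y)\,dy+\int_{B(x,\rho)\setminus 3Q}K(\xi,y)f(y)\,dy.
$$
The middle term has absolute value at most $T^{\star}f(x)$ by definition (take $\e=\rho$). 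In the first term $|y-x|>\rho>2|\xi-x|$, so the smoothness condition applies; decomposing $\{|y-x|>\rho\}$ into dyadic annuli $2^k\rho<|y-x|\le 2^{k+1}\rho$, using $\o(|\xi-x|/|y-x|)\le\o(2^{-k}/3)$ there together with $\sum_{k\ge0}\o(2^{-k}/3)\le c\,\|\o\|_{\mathrm{Dini}}$ (compare with $\int_0^1\o(t)\,dt/t$) and the size of each annulus, one bounds it by $c_n\|\o\|_{\mathrm{Dini}}Mf(x)$. In the last term $\ell_Q\le|\xi-y|\le 4\sqrt n\,\ell_Q$ on $B(x,\rho)\setminus 3Q$, so the size bound gives at most $C_K\ell_Q^{-n}\int_{B(x,\rho)}|f|\le c_nC_KMf(x)$. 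Adding the three pieces, which is uniform in $\xi\in Q$, and then taking $\esssup_{\xi\in Q}$ and $\sup_{Q\ni x}$ gives (ii).

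For (i): restrict to $x$ that is a Lebesgue point of both $f$ and $g:=T(f\chi_{3Q_0})$ (which lies in $L^2\subset L^1_{\rm loc}$, since $f\chi_{3Q_0}\in L^2$). For a cube $Q\ni x$ with $Q\subset Q_0$, the decomposition $f\chi_{3Q_0}=f\chi_{3Q}+f\chi_{3Q_0\setminus 3Q}$ and the definition of ${\mathcal M}_{T,Q_0}$ give $|g(\xi)|\le|T(f\chi_{3Q})(\xi)|+{\mathcal M}_{T,Q_0}f(x)$ for a.e.\ $\xi\in Q$; hence, by the weak $(1,1)$ bound with $t_Q:=\tfrac{2}{|Q|}\|T\|_{L^1\to L^{1,\infty}}\int_{3Q}|f|$,
$$
\big|\{\xi\in Q:\ |g(\xi)|>{\mathcal M}_{T,Q_0}f(x)+t_Q\}\big|\le\frac{\|T\|_{L^1\to L^{1,\infty}}}{t_Q}\int_{3Q}|f|=\frac{|Q|}{2},
$$
so $\{\xi\in Q:\ |g(\xi)|\le{\mathcal M}_{T,Q_0}f(x)+t_Q\}$ has measure $\ge|Q|/2$. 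Since $x$ is a Lebesgue point of $g$, for each $\e>0$ and all sufficiently small $Q$ one has $|\{\xi\in Q:\ |g(\xi)|<|g(x)|-\e\}|<|Q|/2$, so the previous set is not contained in it and there is $\xi\in Q$ with $|g(x)|-\e\le|g(\xi)|\le{\mathcal M}_{T,Q_0}f(x)+t_Q$. Letting $Q\to\{x\}$, so that $t_Q\to 2\cdot3^n\|T\|_{L^1\to L^{1,\infty}}|f(x)|$ by Lebesgue differentiation, and then $\e\to0$, yields (i) with $c_n=2\cdot3^n$; working directly with sublevel sets, rather than averaging $|g|^{1/2}$, is what keeps the constant $1$ in front of ${\mathcal M}_{T,Q_0}f(x)$.

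The main point requiring care, as in \cite{L}, is purely measure-theoretic: ${\mathcal M}_{T,Q_0}$ is an uncountable supremum of essential suprema, so in (i) one cannot control $\esssup_{\xi\in Q}$ uniformly over all $Q$ simultaneously, and must instead fix $x$ and argue along a shrinking sequence of cubes, using that for each \emph{individual} $Q$ the bound $|T(f\chi_{3Q_0\setminus 3Q})(\xi)|\le{\mathcal M}_{T,Q_0}f(x)$ holds for a.e.\ $\xi\in Q$. In (ii) no such issue arises because the far integral is absolutely convergent, hence defined for \emph{every} $\xi\in Q$, and the resulting estimate is uniform in $\xi$.
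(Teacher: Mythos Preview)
The paper does not give its own proof of this lemma but defers to \cite{L}, and your argument is precisely the one found there: part (ii) is the standard kernel computation via the smoothness and size bounds, and part (i) combines the weak $(1,1)$ estimate for $T$ with approximate continuity of $g=T(f\chi_{3Q_0})$ along shrinking cubes. One minor remark: your invocation of ``Lebesgue point of $g$'' uses $g\in L^1_{\rm loc}$, which you justify via $f\chi_{3Q_0}\in L^2$; while this suffices for the applications in the paper (where $f\in L^\infty$ with compact support), the version in \cite{L} avoids any integrability assumption on $g$ by using approximate continuity directly, which holds a.e.\ for any measurable function and yields the same density statement $|\{\xi\in Q:|g(\xi)-g(x)|>\e\}|=o(|Q|)$ that you need.
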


An examination of standard proofs (see, e.g., \cite[Ch. 8.2]{G2}) shows that
\begin{equation}\label{weaktype}
\max(\|T\|_{L^1\to L^{1,\infty}},\|T^{\star}\|_{L^1\to L^{1,\infty}})\le c_nC_T.
\end{equation}
By part (ii) of Lemma \ref{mainl} and by (\ref{weaktype}),
\begin{equation}\label{weakmt}
\|{\mathcal M}_T\|_{L^1\to L^{1,\infty}}\le c_nC_T.
\end{equation}

\begin{proof}[Proof of Theorem \ref{commpoint}]
By Remark \ref{use}, there exist $3^n$ dyadic lattices ${\mathscr D}^{(j)}$ such that
for every $Q\subset {\mathbb R}^n$, there is a cube $R=R_Q\in {\mathscr D}^{(j)}$ for some $j$, for which
$3Q\subset R_Q$ and $|R_Q|\le 9^n|Q|$.

Fix a cube $Q_0\subset {\mathbb R}^n$. Let us show that there exists a
$\frac{1}{2}$-sparse family ${\mathcal F}\subset {\mathcal D}(Q_0)$ such that for a.e. $x\in Q_0$,
\begin{eqnarray}
&&|[b,T](f\chi_{3Q_0})(x)|\label{first}\\
&&\le c_nC_T\sum_{Q\in {\mathcal F}}\big(|b(x)-b_{R_Q}||f|_{3Q}+|(b-b_{R_Q})f|_{3Q}\big)\chi_Q(x)\nonumber.
\end{eqnarray}

It suffices to prove the following recursive claim:
there exist pairwise disjoint cubes $P_j\in {\mathcal D}(Q_0)$ such that
$\sum_j|P_j|\le\frac{1}{2}|Q_0|$ and
\begin{eqnarray*}
|[b,T](f\chi_{3Q_0})(x)|\chi_{Q_0}&\le& c_nC_T\big(|b(x)-b_{R_{Q_0}}||f|_{3Q_0}+|(b-b_{R_{Q_0}})f|_{3Q_0}\big)\\
&+&\sum_j|[b,T](f\chi_{3P_j})(x)|\chi_{P_j}.
\end{eqnarray*}
a.e. on $Q_0$. Indeed, iterating this estimate, we immediately get (\ref{first}) with ${\mathcal F}=\{P_j^k\},k\in {\mathbb Z}_+$, where
$\{P_j^0\}=\{Q_0\}$, $\{P_j^1\}=\{P_j\}$ and $\{P_j^k\}$ are the cubes obtained at the $k$-th stage of the iterative process.

Next, observe that for arbitrary pairwise disjoint cubes $P_j\in {\mathcal D}(Q_0)$,
\begin{eqnarray*}
&&|[b,T](f\chi_{3Q_0})|\chi_{Q_0}=|[b,T](f\chi_{3Q_0})|\chi_{Q_0\setminus \cup_jP_j}+\sum_j|[b,T](f\chi_{3Q_0})|\chi_{P_j}\\
&&\le |[b,T](f\chi_{3Q_0})|\chi_{Q_0\setminus \cup_jP_j}+\sum_j|[b,T](f\chi_{3Q_0\setminus 3P_j})|\chi_{P_j}\\
&&+\sum_j|[b,T](f\chi_{3P_j})|\chi_{P_j}.
\end{eqnarray*}
Hence, in order to prove the recursive claim,
it suffices to show that one can select pairwise disjoint cubes $P_j\in {\mathcal D}(Q_0)$ with
$\sum_j|P_j|\le\frac{1}{2}|Q_0|$ and such that for a.e. $x\in Q_0$,
\begin{eqnarray}
&&|[b,T](f\chi_{3Q_0})|\chi_{Q_0\setminus \cup_jP_j}+\sum_j|[b,T](f\chi_{3Q_0\setminus 3P_j})|\chi_{P_j}\label{suff}\\
&&\le c_nC_T\big(|b(x)-b_{R_{Q_0}}||f|_{3Q_0}+|(b-b_{R_{Q_0}})f|_{3Q_0}\big).\nonumber
\end{eqnarray}

Using that $[b,T]f=[b-c,T]f$ for any $c\in {\mathbb R}$, we obtain
\begin{eqnarray}
&&|[b,T](f\chi_{3Q_0})|\chi_{Q_0\setminus \cup_jP_j}+\sum_j|[b,T](f\chi_{3Q_0\setminus 3P_j})|\chi_{P_j}\label{a}\\
&&\le |b-b_{R_{Q_0}}|\Big(|T(f\chi_{3Q_0})|\chi_{Q_0\setminus\cup_jP_j}+\sum_j|T(f\chi_{3Q_0\setminus 3P_j})|\chi_{P_j}\Big)\nonumber\\
&&+|T((b-b_{R_{Q_0}})f\chi_{3Q_0})|\chi_{Q_0\setminus\cup_jP_j}+\sum_j|T((b-b_{R_{Q_0}})f\chi_{3Q_0\setminus 3P_j})|\chi_{P_j}.\nonumber
\end{eqnarray}

By (\ref{weakmt}), one can choose $\a_n$ such that the set $E=E_1\cup E_2$, where
$$E_1=\{x\in Q_0:|f|>\a_n|f|_{3Q_0}\}\cup \{x\in Q_0:{\mathcal M}_{T,Q_0}f>\a_nC_T|f|_{3Q_0}\}$$
and
\begin{eqnarray*}
E_2&=&\{x\in Q_0:|(b-b_{R_{Q_0}})f|>\a_n|(b-b_{R_{Q_0}})f|_{3Q_0}\}\\
&\cup& \{x\in Q_0:{\mathcal M}_{T,Q_0}(b-b_{R_{Q_0}})f>\a_nC_T|(b-b_{R_{Q_0}})f|_{3Q_0}\},
\end{eqnarray*}
will satisfy $|E|\le \frac{1}{2^{n+2}}|Q_0|$.

The Calder\'on-Zygmund decomposition applied to the function $\chi_E$ on $Q_0$ at height $\la=\frac{1}{2^{n+1}}$
produces pairwise disjoint cubes $P_j\in {\mathcal D}(Q_0)$ such that
$$\frac{1}{2^{n+1}}|P_j|\le |P_j\cap E|\le \frac{1}{2}|P_j|$$
and $|E\setminus \cup_jP_j|=0$. It follows that $\sum_j|P_j|\le \frac{1}{2}|Q_0|$ and $P_j\cap E^{c}\not=\emptyset$.
Therefore,
$$\esssup_{\xi\in P_j}|T(f\chi_{3Q_0\setminus 3P_j})(\xi)|\le c_nC_T|f|_{3Q_0}$$
and
$$\esssup_{\xi\in P_j}|T((b-b_{R_{Q_0}})f\chi_{3Q_0\setminus 3P_j})(\xi)|\le c_nC_T|(b-b_{R_{Q_0}})f|_{3Q_0}.$$

Also, by part (i) of Lemma \ref{mainl} and by (\ref{weaktype}), for a.e. $x\in Q_0\setminus \cup_jP_j$,
$$|T(f\chi_{3Q_0})(x)|\le c_nC_T|f|_{3Q_0}$$
and
$$|T((b-b_{R_{Q_0}})f\chi_{3Q_0})(x)|\le c_nC_T|(b-b_{R_{Q_0}})f|_{3Q_0}.$$

Combining the obtained estimates with (\ref{a}) proves (\ref{suff}), and therefore, (\ref{first}) is proved.

Take now a partition of ${\mathbb R}^n$ by cubes $Q_j$ such that $\text{supp}\,(f)\subset 3Q_j$ for each $j$. For example, take a cube $Q_0$ such that
$\text{supp}\,(f)\subset Q_0$ and cover $3Q_0\setminus Q_0$ by $3^n-1$ congruent cubes $Q_j$. Each of them satisfies $Q_0\subset 3Q_j$. Next, in the same way cover
$9Q_0\setminus 3Q_0$, and so on. The union of resulting cubes, including $Q_0$, will satisfy the desired property.

Having such a partition, apply (\ref{first}) to each $Q_j$. We obtain a $\frac{1}{2}$-sparse family ${\mathcal F}_j\subset {\mathcal D}(Q_j)$ such that
(\ref{first}) holds for a.e. $x\in Q_j$ with $|Tf|$ on the left-hand side. Therefore, setting ${\mathcal F}=\cup_{j}{\mathcal F}_j$, we obtain
that ${\mathcal F}$ is a $\frac{1}{2}$-sparse family, and for a.e. $x\in {\mathbb R}^n$,
\begin{equation}\label{glob}
|[b,T]f(x)|\le c_nC_T\sum_{Q\in {\mathcal F}}\big(|b(x)-b_{R_Q}||f|_{3Q}+|(b-b_{R_Q})f|_{3Q}\big)\chi_Q(x).
\end{equation}

Since $3Q\subset R_Q$ and $|R_Q|\le 3^n|3Q|$, we obtain $|f|_{3Q}\le c_n|f|_{R_Q}$. Further,
setting ${\mathcal S}_j=\{R_Q\in {\mathscr D}^{(j)}:Q\in {\mathcal F}\}$, and using that ${\mathcal F}$ is $\frac{1}{2}$-sparse,
we obtain that each family ${\mathcal S}_j$ is $\frac{1}{2\cdot 9^n}$-sparse. It follows from (\ref{glob}) that
$$|[b,T]f(x)|\le c_nC_T\sum_{j=1}^{3^n}\sum_{R\in {\mathcal S}_j}\big(|b(x)-b_{R}||f|_{R}+|(b-b_{R})f|_{R}\big)\chi_R(x),$$
and therefore, the proof is complete.
\end{proof}

\section{Proof of Theorem \ref{weakcomm} and Corollary \ref{imprdep}}
Fix a dyadic lattice ${\mathscr D}$. Let ${\mathcal S}\subset {\mathscr D}$ be a sparse family.
Define the $L\log L$ sparse operator by
$${\mathcal A}_{\mathcal S,L\log L}f(x)=\sum_{Q\in {\mathcal S}}\|f\|_{L\log L,Q}\chi_Q(x).$$
It follows from (\ref{bmoest}) that
\begin{equation}\label{estt*}
|{\mathcal T}^{\star}_{b,\mathcal S}f(x)|\le c_n\|b\|_{BMO}{\mathcal A}_{\mathcal S,L\log L}f(x).
\end{equation}

Let $\Phi(t)=t\log({\rm{e}}+t)$. Given a Young function $\f$, denote
$$C_{\f}=\int_{1}^{\infty}\frac{\f^{-1}(t)}{t^2\log({\rm{e}}+t)}dt.$$

By Theorem \ref{commpoint} combined with (\ref{estt*}), Lemma \ref{unsparse} and a submultiplicative property of $\Phi$ expressed in (\ref{subm}), Theorem \ref{weakcomm}
is an immediate consequence of the following two lemmas.

\begin{lemma}\label{lloglweak}
Suppose that ${\mathcal S}$ is $\frac{31}{32}$-sparse. Let $\f$ be a Young function such that $C_{\f}<\infty$.
Then for an arbitrary weight $w$,
$$
w_{{\mathcal A}_{\mathcal S,L\log L}f}(\la)\le cC_{\f}\int_{\mathbb{R}^{n}}\Phi\left(\frac{|f(x)|}{\lambda}\right)M_{(\Phi\circ\f)(L)}w(x)dx\quad(\la>0),
$$
where $c>0$ is an absolute constant.
\end{lemma}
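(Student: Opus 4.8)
The model for this estimate is the Domingo-Salazar--Lacey--Rey argument behind (\ref{border}); the extra logarithm in $\Phi$ and in $\Phi\circ\f$ reflects that the $L\log L$-averages $\|f\|_{L\log L,Q}$ sit one maximal operator above ordinary averages. The plan is first to reduce to $\la=1$, $f\ge0$ and ${\mathcal S}$ contained in a single dyadic lattice ${\mathscr D}$, using that ${\mathcal A}_{{\mathcal S},L\log L}$ is positively homogeneous and that the right-hand side transforms correctly under $f\mapsto f/\la$. I would then record the two consequences of the hypotheses that get used repeatedly: (a) since ${\mathcal S}$ is $\tfrac{31}{32}$-sparse, along any strictly decreasing chain $Q_1\supsetneq Q_2\supsetneq\dots$ of cubes of ${\mathcal S}$ one has $E_{Q_{k+1}}\subset Q_k\setminus E_{Q_k}$, hence $|Q_{k+1}|\le\tfrac1{31}|Q_k|$, so the measures decay geometrically in the depth of the chain; and (b) $Mw\le cC_\f\,M_{(\Phi\circ\f)(L)}w$ pointwise, which follows from the definition of $C_\f$, the relation $\Phi^{-1}(t)\approx t/\log({\rm e}+t)$, and a Chebyshev/layer-cake computation.

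Next, the decomposition. Split ${\mathcal S}={\mathcal S}^{\rm big}\cup{\mathcal S}^{\rm small}$, where ${\mathcal S}^{\rm big}=\{Q\in{\mathcal S}:\|f\|_{L\log L,Q}>1\}$. Each $Q\in{\mathcal S}^{\rm big}$ already forces ${\mathcal A}_{{\mathcal S},L\log L}f>1$ on $Q$, and by (\ref{fact}) $|Q|<\int_Q\Phi(|f|)$, so $w(Q)=w_Q|Q|<w_Q\int_Q\Phi(|f|)\le\int_Q\Phi(|f|)Mw$; summing over the maximal (hence pairwise disjoint) cubes of ${\mathcal S}^{\rm big}$ and using observation (b) bounds the contribution of $\cup{\mathcal S}^{\rm big}$ by $cC_\f\int_{{\mathbb R}^n}\Phi(|f|)M_{(\Phi\circ\f)(L)}w$. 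For the remaining sparse family ${\mathcal S}^{\rm small}$ (every cube now has $\|f\|_{L\log L,Q}\le1$), I would run a stopping-time/principal-cube construction on the quantity $\|f\|_{L\log L,Q}$ to reorganize $\sum_{Q\in{\mathcal S}^{\rm small},\,Q\ni x}\|f\|_{L\log L,Q}$ into ``layers'' indexed by an integer $N$ measuring how deep the chain through $x$ must be to accumulate mass $\sim 2^N$; by observation (a) the $N$-th layer lives on a set whose $w$-measure decays geometrically, and on each layer I would estimate this $w$-measure by the Orlicz Fefferman--Stein inequality (Lemma \ref{LemmaAux}) applied to the Young function $\Phi\circ\f$ --- the composition with $\Phi$ being produced precisely by replacing the $L\log L$-averages by a pairing of $\Phi(|f|)$ against an $(\Phi\circ\f)(L)$-average of $w$, via the generalized H\"older inequality (Lemma \ref{holder}) with $A$ of exponential type. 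Summing the layer estimates over $N\in{\mathbb N}$, and using that the geometric factors weighted by $\tfrac{\f^{-1}(t)}{t^2\log({\rm e}+t)}$ at $t\sim 2^N$ telescope to $\lesssim C_\f$, finishes the bound. (Alternatively, one could route everything through a pointwise domination ${\mathcal A}_{{\mathcal S},L\log L}f\lesssim{\mathcal A}_{{\mathcal S}_1}({\mathcal A}_{{\mathcal S}_2}f)$ built from the augmentation Lemma \ref{Lem:Aug}, and apply the sparse-operator form of (\ref{border}) to the composition; but the direct layering argument is self-contained with the tools already available.)

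The technical heart --- and where I expect the real work to be --- is the last step: choosing the stopping thresholds and organizing the layers so that the sum over the depths $N$ is controlled by exactly $C_\f=\int_1^\infty\tfrac{\f^{-1}(t)}{t^2\log({\rm e}+t)}\,dt$ and not by a larger, possibly divergent, quantity such as $\int_1^\infty\tfrac{\f^{-1}(t)}{t^2}\,dt$; this is what dictates the precise form of $C_\f$, and it is the point at which the $\tfrac{31}{32}$-sparseness is spent. A secondary bookkeeping issue is to carry the extra $\Phi$ through the H\"older step cleanly so that $M_{(\Phi\circ\f)(L)}w$, and nothing larger, appears, and to dispose at the outset of the non-dyadic cubes implicit in $\|f\|_{L\log L,Q}$ by passing to the $3^n$ lattices of Remark \ref{use}, exactly as in the proof of Lemma \ref{LemmaAux}. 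Finally, since only $\tfrac{31}{32}$-sparseness is assumed, one may first split ${\mathcal S}$ into finitely many subfamilies via Lemma \ref{unsparse} if it is convenient to have a sparseness constant even closer to $1$, and then reassemble the pieces to obtain the claimed inequality with an absolute constant $c$.
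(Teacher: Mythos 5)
Your outline correctly identifies the source of the argument (the Domingo-Salazar--Lacey--Rey layering behind (\ref{border})), and the peripheral reductions you describe --- homogeneity, disposing of cubes with $\|f\|_{L\log L,Q}>1$ by a Calder\'on--Zygmund/Fefferman--Stein argument as in Lemma \ref{LemmaAux}, and $Mw\le cC_{\f}M_{(\Phi\circ\f)(L)}w$ --- are all sound and consistent with what the paper does. The problem is that the step you yourself label ``the technical heart'' is precisely the step you have not supplied, and it is the entire content of the proof; the paper isolates it as Lemma \ref{keylemma}. Concretely: one decomposes ${\mathcal S}$ into the classes ${\mathcal S}_k=\{Q:4^{-k-1}<\|f\|_{L\log L,Q}\le 4^{-k}\}$, i.e.\ by the \emph{size of the individual averages}, not by the accumulated partial sums along chains that your indexing by ``mass $\sim 2^N$'' suggests. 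Within ${\mathcal S}_k$ one stratifies by nesting depth: the union $A_k(Q)$ of the cubes of ${\mathcal S}_k$ lying $2^k$ generations below $Q$ satisfies $|A_k(Q)|\le 32^{-2^k}|Q|$ by the $\frac{31}{32}$-sparseness. It is this \emph{double-exponential} smallness --- not the ``geometric'' decay you invoke --- which, after H\"older against $\|w\|_{\f,Q}$, yields the factor $1/\bar\f^{-1}(2^{2^k})\approx\f^{-1}(2^{2^k})/2^{2^k}$, whose sum over $k$ is a Riemann sum for $\int_1^{\infty}\frac{\f^{-1}(t)}{t^2\log({\rm e}+t)}dt$. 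A decay that is only geometric in a single index would produce $\sum_N 1/\bar\f^{-1}(2^N)\approx\int_1^{\infty}\frac{\f^{-1}(t)}{t^2}dt$, exactly the larger and possibly divergent quantity you say must be avoided. The shallow part of each class (depth $<2^k$) is handled by the disjointness of the sets $E_Q$ and contributes $2^k\cdot 4^{-k}w(E)$, which sums to an absorbable multiple of $w(E)$. None of this bookkeeping appears in your write-up, so the inequality is not actually established.

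A second, smaller omission: $\Phi$ is not homogeneous, and the rescaling $\Phi(4^k|f|)\le ck4^k\Phi(|f|)$ injects an extra factor $k\approx\log\log t$ at $t=2^{2^k}$. The paper absorbs it by first proving Lemma \ref{lloglsparse} with the constant $K_{\f}=\int_1^{\infty}\frac{\f^{-1}(t)\log\log({\rm e}^2+t)}{t^2\log({\rm e}+t)}dt$ and the smaller maximal operator $M_{\f(L)}$, and only then substituting $\f\mapsto\Phi\circ\f$ (Proposition \ref{common} together with the change of variables (\ref{cing})) to arrive at $C_{\f}$ and $M_{(\Phi\circ\f)(L)}$. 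Your plan to produce $M_{(\Phi\circ\f)(L)}$ in one pass via the generalized H\"older inequality with an exponential-type $A$ is the device the paper uses for ${\mathcal T}_{b,{\mathcal S}}$ in Lemma \ref{tbf}, not for ${\mathcal A}_{{\mathcal S},L\log L}$; it could likely be made to work here as well, but you would still have to track and absorb the $\log\log$ factor, and the proposal does not address it. As written, the argument completes the easy reductions and defers the proof of the actual estimate.
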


\begin{lemma}\label{tbf} Let $b\in BMO$. Suppose that ${\mathcal S}$ is $\frac{7}{8}$-sparse.
Let $\f$ be a Young function such that $C_{\f}<\infty$. Then for an arbitrary weight $w$,
$$
w_{{\mathcal T}_{b,\mathcal S}f}(\la)\le \frac{c_nC_{\f}\|b\|_{BMO}}{\la}\int_{{\mathbb R}^n}|f(x)|M_{(\Phi\circ\f)(L)}w(x)dx\quad(\la>0).
$$
\end{lemma}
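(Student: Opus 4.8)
\textbf{Proof proposal for Lemma \ref{tbf}.}

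The plan is to estimate $w_{{\mathcal T}_{b,{\mathcal S}}f}(\lambda)$ by splitting the sparse family ${\mathcal S}$ according to the size of the local oscillation-weighted averages. Fix $\lambda>0$; after normalizing we may assume $\lambda=1$ and $f\ge 0$. For each integer $k$, let ${\mathcal S}_k$ be the subcollection of those $Q\in{\mathcal S}$ for which the (normalized) local mean value $\frac{1}{|Q|}\int_Q|b-b_Q|f\,dx$ lies in $[2^k,2^{k+1})$, or more precisely one builds the stopping cubes $\{Q_i^k\}$ as the maximal cubes in ${\mathcal S}$ with this average exceeding $2^k$; then $\{x:{\mathcal T}_{b,{\mathcal S}}f(x)>C\}\subset\bigcup_i Q_i^k$ for a suitable $k$-indexed union, and by the Carleson/sparseness property the family $\{Q_i^k\}$ over all $k$ has bounded overlap in the usual geometric-series sense. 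The first key step is therefore a Calder\'on--Zygmund-type stopping-time decomposition adapted to the operator ${\mathcal T}_{b,{\mathcal S}}$, which is where the $\frac{7}{8}$-sparseness hypothesis enters (it guarantees that after passing to the augmented family via Lemma \ref{Lem:Aug}, the principal cubes remain sparse with a controlled constant, so the resulting series in $k$ converges).

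The second key step is to control, for each principal cube $P=Q_i^k$, the quantity $w(P)$. Here one uses the John--Nirenberg inequality \eqref{jn} together with the generalized H\"older inequality of Lemma \ref{holder}. Writing $\|b-b_P\|_{\exp L,P}\le c_n\|b\|_{BMO}$ and choosing the third Young function in Lemma \ref{holder} to be $\Phi(t)=t\log({\rm e}+t)$ (so that $A(t)={\rm e}^t-1$, $B=\bar\Phi\approx t\log({\rm e}+t)$, $C=$ identity-type matches), one gets
\begin{equation}\label{eq:planJN}
\frac{1}{|P|}\int_P|b-b_P|f\,dx\le c_n\|b\|_{BMO}\|f\|_{L\log L,P}.
\end{equation}
Thus on each principal cube $P$ the defining average is dominated by $c_n\|b\|_{BMO}\|f\|_{L\log L,P}$, which reduces matters to an $L\log L$-sparse estimate of exactly the type handled in Lemma \ref{lloglweak}. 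Then I would invoke Lemma \ref{lloglweak} (applied to the principal/augmented family, which is sparse with a constant at least $\frac{31}{32}$ after the reduction — this is the reason for the gap between $\frac{7}{8}$ here and $\frac{31}{32}$ there) to bound
$$
w\Big\{x:\sum_i\|f\|_{L\log L,P_i}\chi_{P_i}(x)>c_n^{-1}\|b\|_{BMO}^{-1}\Big\}\le cC_{\f}\int_{{\mathbb R}^n}\Phi\big(c_n\|b\|_{BMO}|f(x)|\big)M_{(\Phi\circ\f)(L)}w(x)\,dx,
$$
and finally use the submultiplicativity $\Phi(ab)\le 2\Phi(a)\Phi(b)$ from \eqref{subm} to pull the $\|b\|_{BMO}$ factor out, arriving after re-inserting $\lambda$ at the linear-in-$|f|/\lambda$ form stated in the lemma (note $\Phi(t)\ge t$, so $\int\Phi(|f|/\lambda)M_{(\Phi\circ\f)(L)}w$ dominates $\frac1\lambda\int|f|M_{(\Phi\circ\f)(L)}w$ — actually one wants the reverse direction, so the passage from the $L\log L$ bound to the claimed $L^1$-type bound requires the extra observation that on the stopping cubes $\|f\|_{L\log L,P}$ is comparable to an $L^1$ average up to the geometric-series summation, i.e. one genuinely gains because $[b,T]$-type information has already been spent).

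The main obstacle I anticipate is the bookkeeping in the stopping-time argument that converts the genuinely nonlinear operator ${\mathcal T}_{b,{\mathcal S}}$ (whose $x$-dependence through $|b(x)-b_Q|$ couples different scales) into a sum of $L\log L$-sparse pieces to which Lemma \ref{lloglweak} applies cleanly: one must show that iterating the stopping construction the "error" cubes lose a fixed proportion of mass at each step, so that the contribution of $|b(x)-b_{P}|$ on a cube $P$ several generations below its principal ancestor can be absorbed using \eqref{eq:planJN} plus a geometric decay in the generation count. Organizing this so that the sparseness constants track correctly through Lemma \ref{unsparse} and Lemma \ref{Lem:Aug} — and so that the final constant is $c_n C_\f\|b\|_{BMO}$ with no loss — is the delicate part; the Orlicz-maximal-function and John--Nirenberg inputs are by contrast routine given the preliminaries already assembled.
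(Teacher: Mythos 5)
Your plan has a genuine gap, and it stems from conflating ${\mathcal T}_{b,{\mathcal S}}$ with its adjoint ${\mathcal T}^{\star}_{b,{\mathcal S}}$. The quantity on which you build the stopping cubes, $\frac{1}{|Q|}\int_Q|b-b_Q|f\,dx$, and your displayed inequality $\frac{1}{|P|}\int_P|b-b_P|f\,dx\le c_n\|b\|_{BMO}\|f\|_{L\log L,P}$ (which is exactly \eqref{bmoest}) are the ones relevant to ${\mathcal T}^{\star}_{b,{\mathcal S}}$: they reduce \emph{that} operator to the $L\log L$-sparse operator of Lemma \ref{lloglweak} (this is precisely \eqref{estt*}). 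But Lemma \ref{lloglweak} only yields a bound with $\Phi(|f|/\lambda)$ inside the integral, which is strictly weaker than the linear bound $\frac{1}{\lambda}\int|f|\,M_{(\Phi\circ\varphi)(L)}w$ asserted in Lemma \ref{tbf}; the whole point of this lemma is that ${\mathcal T}_{b,{\mathcal S}}$, unlike $[b,T]$ itself, is of weak type $(1,1)$. Your proposed rescue --- that on the stopping cubes $\|f\|_{L\log L,P}$ is comparable to the $L^1$ average $|f|_P$ --- is false in general and is not forced by any stopping rule based on the averages you chose. Moreover, your decomposition never engages with the actual difficulty in ${\mathcal T}_{b,{\mathcal S}}f(x)=\sum_Q|b(x)-b_Q|f_Q\chi_Q(x)$, namely the pointwise factor $|b(x)-b_Q|$, which couples the point $x$ to every cube of the family containing it.

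The paper's argument handles both issues differently. One normalizes $\|b\|_{BMO}=1$, disposes of the set $\{Mf>\lambda/4\}$ by Fefferman--Stein \eqref{fs}, and stratifies the family by the size of the \emph{linear} averages, ${\mathcal S}_k=\{Q\in{\mathcal S}:4^{-k-1}<|f|_Q\le 4^{-k}\}$. Within each stratum the factor $|b(x)-b_Q|$ is split at height $(3/2)^k$. Where $|b(x)-b_Q|\le(3/2)^k$, the contribution is $(3/2)^k\sum_{Q\in{\mathcal S}_k}|f|_Q\chi_Q\le(3/8)^k\sum_{Q\in{\mathcal S}_k}\chi_Q$, a genuinely linear sparse object to which Lemma \ref{keylemma} with $\Psi(t)=t$ applies, the geometric factor making the series in $k$ converge. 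Where $|b(x)-b_Q|>(3/2)^k$, one estimates the $L^1$ norm of that piece directly: by John--Nirenberg \eqref{jn} the set $F_k(Q)=\{x\in Q:|b(x)-b_Q|>(3/2)^k\}$ has relative measure at most $\alpha_k\approx e^{-c(3/2)^k}$, and two applications of the generalized H\"older inequality (first with $\exp L$ against $L\log L$, then with the function $A$ defined by $A^{-1}=\Phi^{-1}/(\varphi^{-1}\circ\Phi^{-1})$ against $\Phi\circ\varphi$) convert this smallness into a sum over $k$ dominated by $C_{\varphi}$, with the integrand $|f|\,M_{(\Phi\circ\varphi)(L)}w$ recovered through the pointwise lower bound $|f|_Q\le\frac{8}{|Q|}\int_{E_Q}|f|$ on the disjoint sets $E_Q$. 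To repair your write-up you would need to replace the $L\log L$ reduction by such a two-regime splitting, exploiting the exponential integrability of $b-b_Q$ pointwise rather than only in average.
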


In the following subsection we separate a common ingredient used in the proofs of both Lemmas \ref{lloglweak} and \ref{tbf}.

\subsection{The key lemma}
Assume that $\Psi$ is a Young function satisfying
\begin{equation}\label{psicond}
\Psi(4t)\le {\Lambda}_{\Psi}\Psi(t)\quad(t>0, \Lambda_{\Psi}\ge 1).
\end{equation}

Given a dyadic lattice ${\mathscr D}$ and $k\in {\mathbb N}$, denote
$${\mathcal F}_k=\{Q\in {\mathscr D}:4^{k-1}<\|f\|_{\Psi,Q}\le 4^k\}.$$

The following lemma in the case $\Psi(t)=t$ was proved in \cite{DLR}. Our extension to any Young function satisfying (\ref{psicond}) is based on similar ideas.
Notice that the main cases of interest for us are $\Psi(t)=t$ and $\Psi(t)=\Phi(t)$.

\begin{lemma}\label{keylemma} Suppose that the family ${\mathcal F}_k$ is $\Big(1-\frac{1}{2\Lambda_{\Psi}}\Big)$-sparse.
Let $w$ be a weight and let $E$ be an arbitrary measurable set with $w(E)<\infty$. Then, for every Young function $\f$,
$$
\int_{E}\Big(\sum_{Q\in {\mathcal F}_k}\chi_Q\Big)wdx\le 2^{k}w(E)+\frac{4\Lambda_{\Psi}}{\bar\f^{-1}((2\Lambda_{\Psi})^{2^k})}\int_{{\mathbb R}^n}\Psi(4^k|f|)M_{\f(L)}wdx.
$$
\end{lemma}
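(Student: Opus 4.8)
\textbf{Proof plan for Lemma \ref{keylemma}.}
The plan is to estimate the left-hand side by splitting the cubes of ${\mathcal F}_k$ according to how large the local average of $w$ is on them. Fix a small threshold; a natural choice is to compare $\frac{w(Q)}{|Q|}$ (or rather $\|w\|_{\f,Q}$) with the ratio $\frac{w(E)}{|E|}$ suitably weighted, but in line with the Domingo-Salazar--Lacey--Rey argument the right split is by the size of the Luxemburg average $\|w\|_{\f,Q}$ against the level $\la_k:=\frac{1}{\bar\f^{-1}((2\Lambda_\Psi)^{2^k})}$. So I would write ${\mathcal F}_k={\mathcal F}_k^{\mathrm{small}}\cup{\mathcal F}_k^{\mathrm{big}}$, where ${\mathcal F}_k^{\mathrm{big}}$ consists of those $Q$ with $\|w\|_{\f,Q}>\la_k$ and ${\mathcal F}_k^{\mathrm{small}}$ of the rest, and handle the two pieces separately.

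For the \emph{small} cubes, I would use that on ${\mathcal F}_k^{\mathrm{small}}$ the quantity $\frac{1}{|Q|}\int_Q w\,dx\le\|w\|_{\f,Q}\cdot(\text{const})$ is controlled by $\la_k$, so that $\int_E\big(\sum_{Q\in{\mathcal F}_k^{\mathrm{small}}}\chi_Q\big)w\,dx$ can be dominated by a Carleson-type sum. Precisely, since ${\mathcal F}_k$ is $\big(1-\frac{1}{2\Lambda_\Psi}\big)$-sparse, it is $\frac{1}{1-1/(2\Lambda_\Psi)}\le\big(1+\frac{1}{\Lambda_\Psi}\big)$-Carleson, hence $\sum_{Q\in{\mathcal F}_k,\,Q\subset R}|Q|\le\big(1+\frac1{\Lambda_\Psi}\big)|R|$ for every $R$; iterating this Carleson packing $j$ times shows that a point lies in at most $C\cdot 2^{j}$ cubes at ``Carleson depth'' $j$, and combined with the $\f$-average bound $\|w\|_{\f,Q}\le\la_k$ one gets a geometric series summing to $\lesssim\Lambda_\Psi\la_k\,M_{\f(L)}w$ pointwise after multiplying and integrating; this is where the factor $\frac{4\Lambda_\Psi}{\bar\f^{-1}((2\Lambda_\Psi)^{2^k})}\int\Psi(4^k|f|)M_{\f(L)}w$ is produced, using also that $Q\in{\mathcal F}_k$ forces $4^{k-1}<\|f\|_{\Psi,Q}$, hence $\frac{1}{|Q|}\int_Q\Psi(4^k|f|)>\Psi(4^{k-1}\cdot 4)\ge 1$ by \eqref{psicond} applied backwards, so $|Q|<\int_Q\Psi(4^k|f|)$ and each cube can be charged to the mass of $\Psi(4^k|f|)$ it carries.

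For the \emph{big} cubes, the point is that there cannot be too many of them through any given point: if $\|w\|_{\f,Q}>\la_k$ then by definition of the Luxemburg norm $\frac{1}{|Q|}\int_Q\f(w/\la_k)>1$, and summing over nested big cubes containing a fixed $x$, together with the Carleson property, forces the chain to be short — of length at most $\sim 2^k$ — because each step multiplies a fixed $\f$-mass quantity by a definite factor while the total is bounded. This yields $\sum_{Q\in{\mathcal F}_k^{\mathrm{big}}}\chi_Q(x)\le 2^k$ for every $x$, hence $\int_E\big(\sum_{Q\in{\mathcal F}_k^{\mathrm{big}}}\chi_Q\big)w\,dx\le 2^k w(E)$, which is the first term on the right.

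The main obstacle I expect is the bookkeeping in the small-cube estimate: tracking how the Carleson constant $1+\frac1{\Lambda_\Psi}$, the geometric decay from iterated packing, and the threshold $\la_k=1/\bar\f^{-1}((2\Lambda_\Psi)^{2^k})$ interact so that everything collapses to exactly the stated constant $4\Lambda_\Psi$ (and not something worse that would damage the later application to $C_\f<\infty$). The doubling hypothesis \eqref{psicond} must be used carefully — both to pass from $\|f\|_{\Psi,Q}>4^{k-1}$ to a usable lower bound on $\frac1{|Q|}\int_Q\Psi(4^k|f|)$, and to make the dyadic $4^k$-scaling compatible with the geometric series — and getting the exponent $2^k$ rather than $k$ in $\bar\f^{-1}((2\Lambda_\Psi)^{2^k})$ is precisely what makes the final $k$-summation in Lemmas \ref{lloglweak} and \ref{tbf} converge, so the delicate part is to lose nothing in this step.
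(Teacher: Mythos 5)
Your decomposition is not the one that proves the lemma, and both halves of it have genuine gaps. The paper (following Domingo-Salazar--Lacey--Rey) splits each cube $Q\in{\mathcal F}_k$ \emph{geometrically}: organizing ${\mathcal F}_k$ into generations ${\mathcal F}_{k,\nu}$, one writes $Q=\big(Q\setminus A_k(Q)\big)\cup A_k(Q)$, where $A_k(Q)$ is the union of the ${\mathcal F}_k$-descendants of $Q$ lying $2^k$ generations below. The term $2^kw(E)$ comes from the shallow parts: $Q\setminus A_k(Q)$ is a union of the pairwise disjoint sets $E_{Q'}$ over the first $2^k$ generations below $Q$, and each fixed $E_{Q'}$ is charged by at most $2^k$ ancestors $Q$ (one per generation), so $\sum_Q w\big(E\cap(Q\setminus A_k(Q))\big)\le 2^kw(E)$. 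This is pure counting and has nothing to do with $w$. Your replacement claim --- that $\sum_Q\chi_Q(x)\le 2^k$ over the cubes with $\|w\|_{\f,Q}>\la_k$ --- is false: take $w$ concentrated near a point $x_0$; then every cube of a long nested chain shrinking to $x_0$ has $\|w\|_{\f,Q}$ as large as you like, and neither sparseness nor the Carleson packing limits the length of such a chain (nested cubes with geometrically decreasing measures always satisfy the packing condition). There is no ``fixed $\f$-mass multiplied by a definite factor at each step'' to run out of.

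The second term is produced by applying H\"older's inequality \eqref{prop2} to the pair $\big(\chi_{A_k(Q)},w\big)$ on $Q$: iterated sparseness gives $|A_k(Q)|\le(2\Lambda_\Psi)^{-2^k}|Q|$, hence $\|\chi_{A_k(Q)}\|_{\bar\f,Q}=1/\bar\f^{-1}(|Q|/|A_k(Q)|)\le 1/\bar\f^{-1}((2\Lambda_\Psi)^{2^k})$, and then $|Q|\le 2\Lambda_\Psi\int_{E_Q}\Psi(4^k|f|)$ together with $\|w\|_{\f,Q}\le\inf_{E_Q}M_{\f(L)}w$ and disjointness of the $E_Q$ finishes. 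So the exponent $2^k$ in $\bar\f^{-1}((2\Lambda_\Psi)^{2^k})$ measures the \emph{depth} of the truncation, not a size threshold for $\|w\|_{\f,Q}$. Your small-cube estimate cannot reach the stated right-hand side: on a cube with $\|w\|_{\f,Q}\le\la_k$ you only get $w(Q)\lesssim\la_k|Q|\lesssim\Lambda_\Psi\la_k\int_{E_Q}\Psi(4^k|f|)\,dx$, and since the smallness of $\|w\|_{\f,Q}$ gives no \emph{lower} bound on $M_{\f(L)}w$ there, you end with $\la_k\int\Psi(4^k|f|)\,dx$ --- an unweighted integral, which is not dominated by the weighted one in the lemma. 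What you did get right are the two auxiliary ingredients: the lower bound $1\le\frac{2\Lambda_\Psi}{|Q|}\int_{E_Q}\Psi(4^k|f|)$ coming from $\|f\|_{\Psi,Q}>4^{-k-1}$, \eqref{fact} and \eqref{psicond}, and the fact that $\bar\f^{-1}$ enters through $\f$--$\bar\f$ duality; but they must be wired into the generation-truncation decomposition, not into a stopping condition on $w$.
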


\begin{proof} By Fatou's lemma, one can assume that the family ${\mathcal F}_k$ is finite.
Split $\mathcal{F}_{k}$ into the layers $\mathcal{F}_{k,\nu}$, $\nu=0,1,\dots$,
where $\mathcal{F}_{k,0}$ is the family of the maximal cubes in $\mathcal{F}_{k}$
and $\mathcal{F}_{k,\nu+1}$ is the family of the maximal cubes in $\mathcal{F}_{k}\setminus\bigcup_{l=0}^{\nu}\mathcal{F}_{k,l}$.

Denote $E_{Q}=Q\setminus\bigcup_{Q'\in\mathcal{F}_{k,\nu+1}}Q'$
for each $Q\in\mathcal{F}_{k,\nu}$. Then the sets $E_Q$ are pairwise disjoint for $Q\in {\mathcal F}_k$.

For $\nu\ge 0$ and $Q\in {\mathcal F}_{k,\nu}$ denote
$$A_k(Q)=\bigcup_{Q'\in\mathcal{F}_{k,\nu+2^k},Q'\subset Q}Q'.$$
Observe that
$$Q\setminus A_{k}(Q)=\bigcup_{l=0}^{2^k-1}\bigcup_{Q'\in\mathcal{F}_{k,\nu+l},Q'\subseteq Q}E_{Q'}.$$

Using the disjointness of the sets $E_Q$, we obtain
\begin{eqnarray}
\sum_{Q\in {\mathcal F}_k}
w\big(E\cap (Q\setminus A_{k}(Q))\big)&\le&
\sum_{\nu=0}^{\infty}\sum_{Q\in\mathcal{F}_{k,\nu}}\sum_{l=0}^{2^k-1}\sum_{\stackrel{{\scriptstyle Q'\in\mathcal{F}_{k,\nu+l}}}{Q'\subseteq Q}}w(E\cap E_{Q'})\nonumber\\
&\le& 2^k\sum_{Q\in {\mathcal F}_k}w(E\cap E_Q)\le 2^{k}w(E).\label{onepart}
\end{eqnarray}

Now, let us show that
\begin{equation}\label{estimeq}
1\le \frac{2\Lambda_{\Psi}}{|Q|}\int_{E_Q}\Psi(4^k|f(x)|)dx\quad(Q\in{\mathcal S}_k).
\end{equation}

Fix a cube $Q\in {\mathcal F}_{k,\nu}$. Since $4^{-k-1}<\|f\|_{\Psi,Q}$, by (\ref{fact})
and by (\ref{psicond}),
\begin{equation}\label{cpsi}
1<\frac{1}{|Q|}\int_Q\Psi(4^{k+1}|f|)\le \frac{\Lambda_{\Psi}}{|Q|}\int_Q\Psi(4^{k}|f|).
\end{equation}
On the other hand, for any $P\in {\mathcal F}_k$ we have $\|f\|_{\Psi,P}\le 4^{-k}$, and hence, by (\ref{fact}),
$$\frac{1}{|P|}\int_P\Psi(4^{k}|f|)\le 1.$$
Using also that,  by the sparseness condition, $|Q\setminus E_Q|\le \frac{1}{2\Lambda_\Psi}|Q|$,
we obtain
\begin{eqnarray*}
&&\frac{1}{|Q|}\int_Q\Psi(4^{k}|f|)=\frac{1}{|Q|}\int_{E_Q}\Psi(4^{k}|f|)+\frac{1}{|Q|}\sum_{Q'\in {\mathcal S}_{k,\nu+1}}\int_{Q'}\Psi(4^{k}|f|)\\
&&\le \frac{1}{|Q|}\int_{E_Q}\Psi(4^{k}|f|)+\frac{|Q\setminus E_Q|}{|Q|}\le \frac{1}{|Q|}\int_{E_Q}\Psi(4^{k}|f|)+\frac{1}{2\Lambda_{\Psi}},
\end{eqnarray*}
which, along with (\ref{cpsi}), proves (\ref{estimeq}).

Applying the sparseness assumption again, we obtain $|A_k(Q)|\le(1/2\Lambda_{\Psi})^{2^k}|Q|$. From this and from H\"older's inequality (\ref{prop2}),
\begin{eqnarray*}
\frac{w(A_k(Q))}{|Q|}&\le& 2\|\chi_{A_k(Q)}\|_{\bar\f,Q}\|w\|_{\f,Q}=\frac{2}{\bar\f^{-1}(|Q|/|A_k(Q)|)}\|w\|_{\f,Q}\\
&\le& \frac{2}{\bar\f^{-1}((2\Lambda_{\Psi})^{2^k})}\|w\|_{\f,Q}.
\end{eqnarray*}
Combining this with (\ref{estimeq}) yields
$$
w(A_k(Q))\le \frac{4\Lambda_{\Psi}}{\bar\f^{-1}((2\Lambda_{\Psi})^{2^k})}\int_{E_Q}\Psi(4^k|f|)M_{\f(L)}wdx.
$$
Hence, by the disjointness of the sets $E_Q$,
$$
\sum_{Q\in {\mathcal F}_k}
w(A_k(Q))\le \frac{4\Lambda_{\Psi}}{\bar\f^{-1}((2\Lambda_{\Psi})^{2^k})}\int_{{\mathbb R}^n}\Psi(4^k|f|)M_{\f(L)}wdx,
$$
which, along with (\ref{onepart}), completes the proof.
\end{proof}

\subsection{Proof of Lemmas \ref{lloglweak} and \ref{tbf}} We first mention another common ingredient used in both proofs.
\begin{prop}\label{common} Let $\Psi$ be a Young function. Assume that $G$ is an operator such that for every Young function $\f$,
\begin{equation}\label{coming}
w_{Gf}(\la)\le \Big(\int_1^{\infty}\frac{\f^{-1}(t)}{t^2}dt\Big)\int_{\mathbb{R}^{n}}\Psi\left(\frac{|f(x)|}{\lambda}\right)M_{\f(L)}w(x)dx.
\end{equation}
Then
$$w_{Gf}(\la)\le cC_{\f}\int_{\mathbb{R}^{n}}\Psi\left(\frac{|f(x)|}{\lambda}\right)M_{(\Phi\circ\f)(L)}w(x)dx,$$
where $c>0$ is an absolute constant, and $C_{\f}=\int_{1}^{\infty}\frac{\f^{-1}(t)}{t^2\log({\rm{e}}+t)}dt.$
\end{prop}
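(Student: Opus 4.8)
The plan is to derive the proposition from the hypothesis (\ref{coming}) by a single judicious choice of the auxiliary Young function appearing there: rather than the given $\f$, we feed $\Phi\circ\f$ into (\ref{coming}). Morally this trades the enlargement of the Orlicz maximal operator from $M_{\f(L)}$ to $M_{(\Phi\circ\f)(L)}$ --- which the target inequality already permits --- for the extra logarithmic gain $1/\log({\rm e}+t)$ inside the outer integral. To make this precise I would first check that $\Phi\circ\f$ is again a Young function: continuity and strict monotonicity are inherited, $\Phi(\f(0))=0$, $\frac{\Phi(\f(t))}{t}=\frac{\Phi(\f(t))}{\f(t)}\cdot\frac{\f(t)}{t}\to\infty$ since both factors tend to $\infty$, and convexity holds because $\Phi$ is convex and nondecreasing while $\f$ is convex. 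Applying (\ref{coming}) with $\Phi\circ\f$ in place of $\f$ then yields
$$w_{Gf}(\la)\le\Big(\int_1^{\infty}\frac{(\Phi\circ\f)^{-1}(t)}{t^2}\,dt\Big)\int_{\mathbb{R}^n}\Psi\Big(\frac{|f(x)|}{\la}\Big)M_{(\Phi\circ\f)(L)}w(x)\,dx,$$
so, comparing with the asserted bound, the whole statement reduces to the elementary estimate
$$\int_1^{\infty}\frac{(\Phi\circ\f)^{-1}(t)}{t^2}\,dt\le c\,C_{\f}=c\int_1^{\infty}\frac{\f^{-1}(t)}{t^2\log({\rm e}+t)}\,dt$$
for an absolute constant $c>0$.

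To prove this estimate I would use $(\Phi\circ\f)^{-1}(t)=\f^{-1}(\Phi^{-1}(t))$ and change variables $t=\Phi(s)=s\log({\rm e}+s)$, $dt=\Phi'(s)\,ds$, turning the left-hand side into $\int_{\Phi^{-1}(1)}^{\infty}\f^{-1}(s)\,\Phi'(s)\,\Phi(s)^{-2}\,ds$. Since $\Phi'(s)=\log({\rm e}+s)+\frac{s}{{\rm e}+s}\le 2\log({\rm e}+s)$ for all $s\ge 0$ and $\Phi(s)=s\log({\rm e}+s)$, for $s\ge 1$ we get $\Phi'(s)\Phi(s)^{-2}\le 2\,s^{-2}\log({\rm e}+s)^{-1}$, so the part of the integral over $s\ge 1$ is at most $2C_{\f}$. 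On the bounded remaining interval $s\in(\Phi^{-1}(1),1)$ (note $\Phi(1)=\log({\rm e}+1)>1$, so $\Phi^{-1}(1)<1$) one has $\f^{-1}(s)\le\f^{-1}(1)$, $\Phi(s)\ge 1$ and $\Phi'(s)\le 3$, so this piece is $\le 3\f^{-1}(1)$; and since $\f^{-1}$ is nondecreasing, $C_{\f}\ge\f^{-1}(1)\int_1^{\infty}\frac{dt}{t^2\log({\rm e}+t)}=c_0\f^{-1}(1)$ with $c_0>0$ absolute, so this piece is $\le(3/c_0)C_{\f}$. Summing the two contributions gives the displayed bound with $c=2+3/c_0$.

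I do not expect any genuine obstacle: the mathematical content is precisely the licence, granted by the hypothesis being stated for all Young functions, to substitute the larger Young function $\Phi\circ\f$, combined with the self-improvement $\int_1^{\infty}(\Phi\circ\f)^{-1}(t)\,t^{-2}\,dt\lesssim C_{\f}$. The only step requiring a little care --- hence the minor "hard part" --- is carrying out the change of variables while keeping all constants absolute, together with the harmless bookkeeping for the bounded range of $s$ near $\Phi^{-1}(1)$, where the comparison $\log({\rm e}+t)\asymp\log({\rm e}+s)$ degenerates but the integrand is trivially controlled.
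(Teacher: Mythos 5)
Your proposal is correct and follows essentially the same route as the paper: substitute $\Phi\circ\f$ for $\f$ in (\ref{coming}) and verify $\int_1^{\infty}(\Phi\circ\f)^{-1}(t)t^{-2}\,dt\le cC_{\f}$ via the change of variables $t=\Phi(s)$, which is exactly the paper's estimate (\ref{cing}). You merely supply a few more details (the Young-function check for $\Phi\circ\f$ and the bounded piece $s\in(\Phi^{-1}(1),1)$) that the paper leaves implicit.
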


Indeed, this follows immediately by setting $\Phi\circ\f$ instead of $\f$ in (\ref{coming}) and observing that $(\Phi\circ\f)^{-1}=\f^{-1}\circ\Phi^{-1}$ and
\begin{equation}\label{cing}
\int_1^{\infty}\frac{\f^{-1}\circ\Phi^{-1}(t)}{t^2}dt=
\int_{\Phi^{-1}(1)}^{\infty}\frac{\f^{-1}(t)}{\Phi(t)^2}\Phi'(t)dt\le cC_{\f}.
\end{equation}

Turn to Lemma \ref{lloglweak}. We actually obtain a stronger statement, namely, we
will prove the following.

\begin{lemma}\label{lloglsparse}
Suppose that ${\mathcal S}$ is $\frac{31}{32}$-sparse. Let $\f$ be a Young function such that
$$K_{\f}=\int_1^{\infty}\frac{\f^{-1}(t)\log\log({\rm e}^2+t)}{t^2\log({\rm e}+t)}dt<\infty.$$
Then for an arbitrary weight $w$,
$$
w_{{\mathcal A}_{\mathcal S,L\log L}f}(\la)\le cK_{\f}\int_{\mathbb{R}^{n}}\Phi\left(\frac{|f(x)|}{\lambda}\right)M_{\f(L)}w(x)dx\quad(\la>0),
$$
where $c>0$ is an absolute constant.
\end{lemma}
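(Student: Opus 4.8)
\textbf{Proof plan for Lemma \ref{lloglsparse}.}

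The plan is to control $\|f\|_{L\log L,Q}$ for $Q\in {\mathcal S}$ in terms of the level $k$ at which $Q$ lives, and then apply Lemma \ref{keylemma} with the choice $\Psi=\Phi$. First I would normalize: replacing $f$ by $f/\la$ and using the submultiplicativity $\Phi(ab)\le 2\Phi(a)\Phi(b)$ from (\ref{subm}), together with the fact that $\Phi$ satisfies the doubling condition (\ref{psicond}) with an absolute constant $\Lambda_\Phi$ (indeed $\Phi(4t)\le 8\Phi(t)$ for $t>0$, say), it suffices to take $\la=1$ and estimate $w_{{\mathcal A}_{\mathcal S,L\log L}f}(1)$ by $cK_\f\int \Phi(|f|)M_{\f(L)}w$. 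Next I would stratify the sparse family according to the $L\log L$ average: set ${\mathcal F}_k=\{Q\in {\mathcal S}:4^{k-1}<\|f\|_{\Phi,Q}\le 4^k\}$ for $k\in {\mathbb Z}$, so that ${\mathcal S}=\bigcup_k {\mathcal F}_k$. Since ${\mathcal S}$ is $\tfrac{31}{32}$-sparse and $1-\tfrac{1}{2\Lambda_\Phi}=1-\tfrac{1}{16}=\tfrac{15}{16}\le \tfrac{31}{32}$, each ${\mathcal F}_k$ is $\big(1-\tfrac{1}{2\Lambda_\Phi}\big)$-sparse, so Lemma \ref{keylemma} applies to each ${\mathcal F}_k$ with $\Psi=\Phi$.

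On the set where ${\mathcal A}_{\mathcal S,L\log L}f>1$, the cubes with $\|f\|_{\Phi,Q}\le 4^{-N}$ for $N$ large contribute negligibly (their total $L\log L$-mass over any point is a convergent geometric series summing to something $\lesssim 4^{-N}$, hence eventually $<1$), so only finitely many negative $k$ and all positive $k$ matter; more precisely one splits $\{{\mathcal A}_{\mathcal S,L\log L}f>1\}\subseteq \bigcup_{k\ge k_0}\{x:\sum_{Q\in {\mathcal F}_k}\chi_Q(x)>c4^{k}\}$ for a suitable absolute $c$ and $k_0$, using that $\sum_{k}4^k|\{\text{at scale }k\}|$ telescopes. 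Then for each such $k$, by Chebyshev and Lemma \ref{keylemma} applied with the set $E=\{x:\sum_{Q\in {\mathcal F}_k}\chi_Q(x)>c4^k\}$ (which has finite $w$-measure once we truncate, or we argue with the layered decomposition directly), one gets
\begin{equation*}
w\Big\{x:\sum_{Q\in {\mathcal F}_k}\chi_Q(x)>c4^k\Big\}\le \frac{1}{c4^k}\Big(2^k w(E)+\frac{4\Lambda_\Phi}{\bar\f^{-1}((2\Lambda_\Phi)^{2^k})}\int_{{\mathbb R}^n}\Phi(4^k|f|)M_{\f(L)}w\,dx\Big),
\end{equation*}
and the first term is absorbed (after choosing $c$ so that $2^k/(c4^k)\le \tfrac12$, i.e. $w(E)$ absorbed into the left side), while $\Phi(4^k|f|)\le 2\Phi(4^k)\Phi(|f|)$ by (\ref{subm}). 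Summing over $k\ge k_0$, the total bound is
\begin{equation*}
cK_\f'\int_{{\mathbb R}^n}\Phi(|f|)M_{\f(L)}w\,dx, \qquad K_\f'=\sum_{k}\frac{\Phi(4^k)}{4^k}\cdot\frac{1}{\bar\f^{-1}((2\Lambda_\Phi)^{2^k})}.
\end{equation*}

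The remaining, and main, task is to show that this discrete sum $K_\f'$ is comparable to the integral $K_\f=\int_1^\infty \frac{\f^{-1}(t)\log\log({\rm e}^2+t)}{t^2\log({\rm e}+t)}dt$. Here one uses $t\le \bar\f^{-1}(t)\f^{-1}(t)\le 2t$, so $1/\bar\f^{-1}((2\Lambda_\Phi)^{2^k})\sim \f^{-1}((2\Lambda_\Phi)^{2^k})/(2\Lambda_\Phi)^{2^k}$; the factor $\Phi(4^k)/4^k\sim k$ accounts for the $\log\log$ in $K_\f$ after the substitution $t=(2\Lambda_\Phi)^{2^k}$, since then $\log t\sim 2^k$, $\log\log({\rm e}^2+t)\sim k$, and $dt/t\sim 2^k\log 2\,dk$, so that $\frac{\f^{-1}(t)\log\log t}{t^2\log t}\,dt\sim \frac{\f^{-1}((2\Lambda_\Phi)^{2^k})}{(2\Lambda_\Phi)^{2^k}}\cdot\frac{k}{2^k}\cdot 2^k\,dk\sim \frac{\Phi(4^k)}{4^k}\cdot\frac{\f^{-1}((2\Lambda_\Phi)^{2^k})}{(2\Lambda_\Phi)^{2^k}}\,dk$, which is exactly a term of $K_\f'$. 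Comparing the sum over $k$ with the corresponding integral (monotonicity of $\f^{-1}(t)/t$ lets one sandwich) finishes the identification $K_\f'\sim K_\f$. I expect the bookkeeping in this last change-of-variables comparison — keeping track of the doubling constant $\Lambda_\Phi$ and justifying that only $k\gtrsim 1$ contribute — to be the most delicate part; the rest is a direct application of Lemma \ref{keylemma} together with the submultiplicativity (\ref{subm}).
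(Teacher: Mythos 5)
Your overall strategy is the same as the paper's: stratify the sparse family by the size of $\|f\|_{L\log L,Q}$, apply Lemma \ref{keylemma} with $\Psi=\Phi$ to each stratum, absorb the $2^{k}w(E)$-type terms, and identify the resulting series with $K_{\f}$ via $\bar\f^{-1}(t)\f^{-1}(t)\approx t$ and the substitution $t=2^{2^k}$. That final comparison of the sum $\sum_k k/\bar\f^{-1}(2^{2^k})$ with $K_\f$ is exactly what the paper does, and your accounting of where the $\log\log$ comes from (the factor $\Phi(4^k)/4^k\sim k$ from submultiplicativity) is correct.

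There is, however, a genuine gap in how you propose to cover \emph{all} strata by Lemma \ref{keylemma}. That lemma only works for the strata where the Luxemburg averages are \emph{small} relative to $\la$: its proof needs $\|f\|_{\Phi,P}\le 4^{-k}$ for every $P$ in the stratum with $k\ge 1$ (this is what makes $\frac{1}{|P|}\int_P\Phi(4^k|f|)\le 1$ and hence lets the children's contribution be absorbed by $|Q\setminus E_Q|/|Q|\le 1/(2\Lambda_\Psi)$), and both the absorption $2^k/(c4^k)\le 1/2$ and the convergence of $\sum_k k/\bar\f^{-1}(2^{2^k})$ live in that regime. Your plan keeps ``all positive $k$'' (in your convention, the cubes with $\|f\|_{\Phi,Q}\gtrsim 1$) inside the same machinery, where it degenerates. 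The paper avoids this by first removing the set $\{M_{L\log L}(f/\la)>1/4\}$, whose $w$-measure is controlled by the Orlicz Fefferman--Stein inequality (Lemma \ref{LemmaAux} together with Remark \ref{llog}) at the acceptable cost $c\int\Phi(|f|/\la)Mw\le c\int\Phi(|f|/\la)M_{\f(L)}w$; on the remaining set every cube of ${\mathcal S}$ that meets it has $\|f\|_{L\log L,Q}\le\la/4$, so only the small-average strata occur. You need this (or an equivalent direct estimate on the maximal cubes of the large-average strata) as a separate ingredient. Relatedly, your claim that the small-average cubes contribute a ``convergent geometric series'' pointwise is false: for fixed $k$ the stack height $\sum_{Q\in{\mathcal F}_k}\chi_Q(x)$ is unbounded in general -- controlling it off an exceptional set is precisely the content of Lemma \ref{keylemma}, so no a priori truncation in $k$ is available (nor needed, since $\sum_{k\ge1}2^{-k}w(E)$ absorbs). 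Finally, fix the indexing: the family in Lemma \ref{keylemma} should be $\{Q:4^{-k-1}<\|f\|_{\Psi,Q}\le 4^{-k}\}$ (as its proof, and the application, make clear), and your threshold $\sum_{Q\in{\mathcal F}_k}\chi_Q>c4^k$ is only consistent with that convention.
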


Since $K_{\f}\le \int_1^{\infty}\frac{\f^{-1}(t)}{t^2}dt$, Proposition \ref{common} shows that
Lemma \ref{lloglweak} follows from Lemma \ref{lloglsparse}.

\begin{proof}[Proof of Lemma \ref{lloglsparse}]
Consider the set
$$E=\{x\in {\mathbb R}^n:{\mathcal A}_{\mathcal S,L\log L}f(x)>4, M_{L\log L}f(x)\le 1/4\}.$$
By homogeneity combined with Remark \ref{llog}, it suffices to prove that
\begin{equation}\label{suffic}
w(E)\le cK_{\f}\int_{\mathbb{R}^{n}}\Phi(|f(x)|)M_{\varphi(L)}w(x)dx.
\end{equation}
One can assume that $w(E)<\infty$
(otherwise, one could first obtain (\ref{suffic}) for $E\cap K$ instead of $E$, for any compact set $K$).

Denote
$${\mathcal S}_k=\{Q\in {\mathcal S}:4^{-k-1}<\|f\|_{L\log L,Q}\le 4^{-k}\}$$
and set
$$T_kf(x)=\sum_{Q\in {\mathcal S}_k}\|f\|_{L\log L,Q}\chi_Q(x).$$
If $E\cap Q\not=\emptyset$ for some $Q\in {\mathcal S}$, then $\|f\|_{L\log L,Q}\le 1/4$. Therefore, for $x\in E$,
\begin{equation}\label{repr}
{\mathcal A}_{\mathcal S,L\log L}f(x)=\sum_{k=1}^{\infty}T_kf(x).
\end{equation}

Now we apply Lemma \ref{keylemma} with $\Psi=\Phi$ and ${\mathcal F}_k={\mathcal S}_k$.
Notice that, by (\ref{subm}), one can take $\Lambda_{\Psi}=16$ in (\ref{psicond}) and
$\Phi(4^k|f|)\le ck4^k\Phi(|f|)$. Combining this with $T_kf(x)\le 4^{-k}\sum_{Q\in {\mathcal S}_k}\chi_Q$,
by Lemma \ref{keylemma} we obtain
$$
\int_{E}(T_{k}f)wdx\le 2^{-k}w(E)+\frac{ck}{\bar\f^{-1}(2^{2^{k}})}\int_{\mathbb{R}^{n}}\Phi(|f(x)|)M_{\varphi(L)}w(x)dx.
$$

Combining (\ref{repr}) with the latter estimate implies,
\begin{eqnarray*}
w(E)&\le& \frac{1}{4}\int_E({\mathcal A}_{\mathcal S,L\log L}f)wdx\le \frac{1}{4}\sum_{k=1}^{\infty}\int_{E}(T_{k}f)wdx\\
&\le&\frac{1}{4}w(E)+c\left(\sum_{k=1}^{\infty}\frac{k}{\bar\f^{-1}(2^{2^{k}})}\right)\int_{\mathbb{R}^{n}}\Phi(|f(x)|)M_{\varphi(L)}w(x)dx.
\end{eqnarray*}
From this,
$$
w(E)\le c\left(\sum_{k=1}^{\infty}\frac{k}{\bar\f^{-1}(2^{2^{k}})}\right)\int_{\mathbb{R}^{n}}\Phi(|f(x)|)M_{\varphi(L)}w(x)dx.
$$
Next, using that $\bar\f^{-1}(t)\f^{-1}(t)\approx t$, we obtain
$$
\sum_{k=1}^{\infty}\frac{k}{\bar\f^{-1}(2^{2^{k}})}\le c\sum_{k=1}^{\infty}\int_{2^{2^{k-1}}}^{2^{2^k}}\frac{\log\log({\rm e}^2+t)}{\bar\f^{-1}(t)t\log({\rm e}+t)}dt\le cK_{\f},
$$
which, along with the previous estimate, yields (\ref{suffic}), and therefore, the proof is complete.
\end{proof}

\begin{proof}[Proof of Lemma \ref{tbf}]
Denote
$$E=\{x:|{\mathcal T}_{b,\mathcal S}f(x)|>8,Mf(x)\le 1/4\}.$$
By the Fefferman-Stein estimate (\ref{fs}) and by homogeneity, it suffices to assume that $\|b\|_{BMO}=1$ and to show that in this case,
$$
w(E)\le cC_{\f}\int_{{\mathbb R}^n}|f|M_{(\Phi\circ\f)(L)}wdx.
$$

Let
$${\mathcal S}_k=\{Q\in {\mathcal S}:4^{-k-1}<|f|_Q\le 4^{-k}\}$$
and for $Q\in {\mathcal S}_k$, set
$$F_k(Q)=\{x\in Q:|b(x)-b_Q|>(3/2)^k\}.$$
If $E\cap Q\not=\emptyset$ for some $Q\in {\mathcal S}$, then $\|f\|_{Q}\le 1/4$. Therefore, for $x\in E$,
\begin{eqnarray*}
&&|{\mathcal T}_{b,\mathcal S}f(x)|\le\sum_{k=1}^{\infty}\sum_{Q\in {\mathcal S}_k}|b(x)-b_Q||f|_Q\chi_Q(x)\\
&&\le \sum_{k=1}^{\infty}(3/2)^k\sum_{Q\in {\mathcal S}_k}|f|_Q\chi_Q(x)+\sum_{k=1}^{\infty}\sum_{Q\in {\mathcal S}_k}|b(x)-b_Q||f|_Q\chi_{F_k(Q)}(x)\\
&&\equiv {\mathcal T}_1f(x)+{\mathcal T}_2f(x).
\end{eqnarray*}

Let $E_i=\{x\in E:{\mathcal T}_if(x)>4\}, i=1,2.$ Then
\begin{equation}\label{twot}
w(E)\le w(E_1)+w(E_2).
\end{equation}

Lemma \ref{keylemma} with $\Psi(t)=t$ yields (with any Young function $\f$)
$$
\int_{E_1}({\mathcal T}_1f)wdx\le \Big(\sum_{k=1}^{\infty}(3/4)^k\Big)w(E_1)+16\Big(\sum_{k=1}^{\infty}\frac{(3/2)^k}{\bar\f^{-1}(2^{2^k})}\Big)\int_{{\mathbb R}^n}|f|M_{\f(L)}wdx.
$$
This estimate, combined with $w(E_1)\le \frac{1}{4}\int_{E_1}({\mathcal T}_1f)wdx$, implies
$$
w(E_1)\le 16\Big(\sum_{k=1}^{\infty}\frac{(3/2)^k}{\bar\f^{-1}(2^{2^k})}\Big)\int_{{\mathbb R}^n}|f|M_{\f(L)}wdx.
$$
Since $\bar\f^{-1}(t)\f^{-1}(t)\approx t$, we obtain
\begin{eqnarray*}
\sum_{k=1}^{\infty}\frac{(3/2)^k}{\bar\f^{-1}(2^{2^k})}\le c\sum_{k=1}^{\infty}\int_{2^{2^{k-1}}}^{2^{2^k}}
\frac{1}{\bar\f^{-1}(t)}\frac{dt}{t}\le c\int_1^{\infty}\frac{\f^{-1}(t)}{t^2}dt.
\end{eqnarray*}
Hence,
$$
w(E_1)\le c\Big(\int_1^{\infty}\frac{\f^{-1}(t)}{t^2}dt\Big)\int_{{\mathbb R}^n}|f|M_{\f(L)}wdx,
$$
which by Proposition \ref{common} yields
\begin{equation}\label{oneparttt}
w(E_1)\le cC_{\f}\int_{{\mathbb R}^n}|f|M_{(\Phi\circ\f)(L)}wdx.
\end{equation}

Turn to the estimate of $w(E_2)$. Exactly as in the proof of Lemma~\ref{keylemma}, for $Q\in {\mathcal S}_k$ define  disjoint subsets $E_Q$.
Then, by (\ref{estimeq}),
$$|f|_Q\le \frac{8}{|Q|}\int_{E_Q}|f|dx.$$
Hence,
\begin{eqnarray}
w(E_2)&\le& \frac{1}{4}\|{\mathcal T}_2f\|_{L^1}\label{we2}\\
&\le& 2\sum_{k=1}^{\infty}\sum_{Q\in {\mathcal S}_k}\Big(\frac{1}{|Q|}\int_{F_k(Q)}|b-b_Q|wdx\Big)\int_{E_Q}|f|\nonumber.
\end{eqnarray}

Now we apply twice the generalized H\"older inequality. First, by (\ref{bmoest}),
\begin{equation}\label{firsth}
\frac{1}{|Q|}\int_{F_k(Q)}|b-b_Q|wdx\le c_n\|w\chi_{F_k(Q)}\|_{L\log L,Q}.
\end{equation}
Second, we use (\ref{prop3}) with $C(t)=\Phi(t), B(t)=\Phi\circ\f(t)$ and $A$ defined by
$$A^{-1}(t)=\frac{C^{-1}(t)}{B^{-1}(t)}=\frac{\Phi^{-1}(t)}{\f^{-1}\circ\Phi^{-1}(t)}.$$
Since $\f(t)/t$ and $\Phi$ are strictly increasing functions, $A$ is strictly increasing, too.
Hence, by (\ref{prop3}), we obtain
\begin{eqnarray}
\|w\chi_{F_k(Q)}\|_{L\log L,Q} &\le& 2\|\chi_{F_k(Q)}\|_{A,Q}\|w\|_{(\Phi\circ\f),Q}\label{secondh}\\
&=&\frac{2}{A^{-1}(|Q|/|F_k(Q)|)}\|w\|_{(\Phi\circ\f),Q}.\nonumber
\end{eqnarray}

By the John-Nirenberg inequality (\ref{jn}), $|F_k(Q)|\le \a_k|Q|,$
where $\a_k=\min(1,e^{-\frac{(3/2)^k}{2^ne}+1})$. Combining this with (\ref{firsth}) and (\ref{secondh}) yields
$$
\frac{1}{|Q|}\int_{F_k(Q)}|b-b_Q|wdx\le \frac{c_n}{A^{-1}(1/\a_k)}\|w\|_{(\Phi\circ\f),Q}.
$$
From this and from (\ref{we2}) we obtain
\begin{eqnarray*}
w(E_2)&\le& c_n\sum_{k=1}^{\infty}\frac{1}{A^{-1}(1/\a_k)}\sum_{Q\in {\mathcal S}_k}\|w\|_{(\Phi\circ\f),Q}\int_{E_Q}|f|\\
&\le& c_n\Big(\sum_{k=1}^{\infty}\frac{1}{A^{-1}(1/\a_k)}\Big)\int_{{\mathbb R}^n}|f|M_{(\Phi\circ\f)(L)}w(x)dx.
\end{eqnarray*}
Further, the sum on the right-hand side can be estimated as follows:
\begin{eqnarray*}
&&\sum_{k=1}^{\infty}\frac{1}{A^{-1}(1/\a_k)}\le c\sum_{k=1}^{\infty}\int_{1/\a_{k-1}}^{1/\a_k}\frac{1}{A^{-1}(t)}\frac{1}{t\log({\rm e}+t)}dt\\
&&\le c\int_1^{\infty}\frac{\f^{-1}\circ\Phi^{-1}(t)}{\Phi^{-1}(t)}\frac{1}{t\log({\rm e}+t)}dt\le c\int_1^{\infty}\frac{\f^{-1}\circ\Phi^{-1}(t)}{t^2}dt.
\end{eqnarray*}
Therefore, by (\ref{cing}),
$$
w(E_2)\le c_nC_{\f}\int_{{\mathbb R}^n}|f|M_{(\Phi\circ\f)(L)}w(x)dx,
$$
which, along with (\ref{twot}) and (\ref{oneparttt}), completes the proof.
\end{proof}

\subsection{Proof of Corollary \ref{imprdep}} The proof follows the same scheme as in the proof of \cite[Corollary 1.4]{HP}, and
hence we outline it briefly.

Using that $\log t\le \frac{t^{\a}}{\a}$ for $t\ge 1$ and $\a>0$, we obtain
$$M_{L(\log L)^{1+\e}}w(x)\le \frac{c}{\a^{1+\e}}M_{L^{1+(1+\e)\a}}w(x).$$
Next we use that for $r_n=1+\frac{1}{c_n[w]_{A_{\infty}}}$, $M_{L^{r_n}}w(x)\le 2Mw(x)$.
Hence, if $\a$ is such that $(1+\e)\a=\frac{1}{c_n[w]_{A_{\infty}}}$, then
$$\frac{1}{\e}M_{L(\log L)^{1+\e}}w(x)\le \frac{c_n}{\e}[w]_{A_{\infty}}^{1+\e}Mw(x)\le \frac{c_n}{\e}[w]_{A_{\infty}}^{1+\e}[w]_{A_1}w(x).$$
This estimate with $\e=1/\log({\rm{e}}+[w]_{A_{\infty}})$, along with (\ref{caseof}), completes the proof of Corollary \ref{imprdep}.

\section{Proof of Theorem \ref{bhlw}}
The main role in the proof is played by the following lemma. Denote by $\Omega(b;Q)$ the standard
mean oscillation,
$$\Omega(b;Q)=\frac{1}{|Q|}\int_Q|b-b_Q|dx.$$

\begin{lemma}\label{augm}
Let $\mathscr{D}$ be a dyadic lattice and let ${\mathcal{S}}\subset{\mathscr{D}}$
be a $\gamma$-sparse family. Assume that $b\in L_{loc}^{1}$. Then
there exists a $\frac{\gamma}{2\left(1+\gamma\right)}$-sparse family $\widetilde{\mathcal{S}}\subset{\mathscr{D}}$
such that ${\mathcal{S}}\subset\widetilde{\mathcal{S}}$ and for every
cube $Q\in\widetilde{\mathcal{S}}$,
\begin{equation}
|b(x)-b_{Q}|\le 2^{n+2} \sum_{R\in\widetilde{\mathcal{S}},R\subseteq Q}\Omega(b;R)\chi_{R}(x)\label{fuj}
\end{equation}
 for a.e. $x\in Q$.\end{lemma}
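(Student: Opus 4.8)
The plan is to build $\widetilde{\mathcal{S}}$ as an augmented family in the sense of Lemma \ref{Lem:Aug}, taking the family $\mathcal{F}(Q)$ attached to each $Q\in\mathcal{S}$ to be a suitable Calder\'on--Zygmund stopping-time family that controls the oscillation of $b$ on $Q$. Concretely, for a fixed $Q\in\mathcal{S}$ I would run the standard local Calder\'on--Zygmund decomposition of $b-b_Q$ on $Q$ with respect to the dyadic lattice $\mathcal{D}(Q)$: let $\mathcal{F}(Q)$ consist of $Q$ together with the maximal cubes $R\subseteq Q$ such that $\Omega(b;R)>2\,\Omega(b;Q)$ at the next scale up (equivalently, the stopping cubes where the average of $|b-b_Q|$ first exceeds a fixed multiple of $\Omega(b;Q)$). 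A routine iteration of this construction — stopping again inside each selected cube — yields a nested family $\mathcal{F}(Q)\subseteq\mathcal{D}(Q)$ with the two standard properties: (a) $\mathcal{F}(Q)$ is sparse (in fact each stopping cube has its children summing to at most half its measure, so $\mathcal{F}(Q)$ is $\tfrac12$-sparse), and (b) the pointwise bound
\begin{equation*}
|b(x)-b_Q|\le 2^{n+2}\sum_{R\in\mathcal{F}(Q)}\Omega(b;R)\chi_R(x)\quad\text{for a.e. }x\in Q,
\end{equation*}
which follows from the Lebesgue differentiation theorem: a.e.\ $x$ lies in a decreasing sequence of cubes of $\mathcal{D}(Q)$ along which the averages of $b$ converge to $b(x)$, and between consecutive stopping cubes the average of $b$ changes by at most $2^n$ times the average of $|b-b_R|$ over the parent, which is $\le 2^n\cdot 2\,\Omega(b;R)$ by stopping; telescoping and summing the geometric-type series gives the constant $2^{n+2}$.

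Next I would apply the augmentation construction from Section 2.1: set $\widetilde{\mathcal{F}}(Q)$ to be those $P\in\mathcal{F}(Q)$ not contained in any $R\in\mathcal{S}$ with $R\subsetneq Q$, and $\widetilde{\mathcal{S}}=\bigcup_{Q\in\mathcal{S}}\widetilde{\mathcal{F}}(Q)$. By construction $\mathcal{S}\subseteq\widetilde{\mathcal{S}}\subset\mathscr{D}$ (each $Q\in\mathcal{S}$ belongs to $\mathcal{F}(Q)$ and is not strictly contained in a smaller member of $\mathcal{S}$), and since $\mathcal{S}$ is $\gamma$-sparse and each $\mathcal{F}(Q)$ is $\tfrac12$-sparse, Lemma \ref{Lem:Aug} gives that $\widetilde{\mathcal{S}}$ is $\frac{(1/2)\gamma}{1+\gamma}=\frac{\gamma}{2(1+\gamma)}$-sparse, exactly the claimed sparseness constant.

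It remains to verify the pointwise estimate \eqref{fuj} for an arbitrary $Q\in\widetilde{\mathcal{S}}$, not just for $Q\in\mathcal{S}$, and this is the step I expect to require the most care. Any $Q\in\widetilde{\mathcal{S}}$ lies in $\widetilde{\mathcal{F}}(Q_0)$ for some $Q_0\in\mathcal{S}$, so $Q\in\mathcal{F}(Q_0)$; the stopping family $\mathcal{F}(Q_0)$ restricted to $\mathcal{D}(Q)$ then plays the role of "$\mathcal{F}(Q)$" for $Q$, and the same telescoping argument as above — now started from the cube $Q$ rather than from $Q_0$, using that the averages of $b$ are controlled on the chain of stopping cubes descending from $Q$ — produces
\begin{equation*}
|b(x)-b_Q|\le 2^{n+2}\sum_{R\in\mathcal{F}(Q_0),\,R\subseteq Q}\Omega(b;R)\chi_R(x)\quad\text{for a.e. }x\in Q.
\end{equation*}
The point to check is that every $R\in\mathcal{F}(Q_0)$ with $R\subseteq Q$ actually survives into the augmented family, i.e.\ $R\in\widetilde{\mathcal{S}}$, so that the sum may be enlarged to a sum over $R\in\widetilde{\mathcal{S}}$, $R\subseteq Q$, yielding \eqref{fuj}. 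If $R$ were deleted in the augmentation, it would be contained in some $R'\in\mathcal{S}$ with $R'\subsetneq Q_0$; but then, because the whole construction can be arranged compatibly (one runs the stopping times so that the families $\mathcal{F}(Q_0)$ and $\mathcal{F}(R')$ agree on $\mathcal{D}(R')$, or alternatively one simply re-indexes $R$ as belonging to $\widetilde{\mathcal{F}}(R')$), $R$ reappears as a member of $\widetilde{\mathcal{F}}(R')\subseteq\widetilde{\mathcal{S}}$ with the same oscillation term. Making this compatibility precise — essentially observing that for $R'\subset Q_0$ in $\mathcal{S}$ one has $b_{R'}$ bookkept through the chain so that oscillation sums still telescope across the "join" — is the only genuinely delicate bookkeeping, and once it is in place \eqref{fuj} follows and the proof is complete.
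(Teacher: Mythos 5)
Your overall architecture --- a local Calder\'on--Zygmund stopping time producing a $\tfrac12$-sparse family $\mathcal{F}(Q)$ with the pointwise oscillation bound, followed by the augmentation of Lemma \ref{Lem:Aug}, which indeed yields the constant $\frac{\gamma}{2(1+\gamma)}$ --- coincides with the paper's, and you correctly locate the delicate point: cubes of $\mathcal{F}(Q_0)$ lying below $Q$ may be deleted by the augmentation. But your resolution of that point does not work, and this is a genuine gap. The first fix fails because $\mathcal{F}(Q_0)$ and $\mathcal{F}(R')$ cannot be ``arranged to agree'' on $\mathcal{D}(R')$: inside $\mathcal{F}(Q_0)$ the stopping below $R'$ is run relative to the average of $b$ over the stopping ancestor of $R'$ (and $R'$ need not itself be a stopping cube of $\mathcal{F}(Q_0)$), whereas $\mathcal{F}(R')$ is run relative to $b_{R'}$; these are different families in general. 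The second fix (``re-indexing'' deleted cubes into $\widetilde{\mathcal{F}}(R')$) amounts to augmenting $R'$ by $\mathcal{F}(R')\cup\bigl(\mathcal{F}(Q_0)\cap\mathcal{D}(R')\bigr)$; since every ancestor of $R'$ in $\mathcal{S}$ can dump cubes into $R'$ this way, the augmenting family at $R'$ becomes a union of unboundedly many sparse families, and the Carleson bookkeeping behind Lemma \ref{Lem:Aug} (hence the claimed sparseness constant) is lost.

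The paper circumvents this by proving a stronger recursive step: if $\{P_j\}$ are the stopping cubes of $Q$ and $E$ the associated exceptional set, then any dyadic $R\subseteq Q$ containing some $P_j$ meets $E^{c}$ and therefore satisfies $|b_R-b_Q|\le 2^{n+2}\Omega(b;Q)$; consequently
$$|b(x)-b_Q|\chi_Q\le 2^{n+2}\Omega(b;Q)+\sum_i|b(x)-b_{R_i}|\chi_{R_i}$$
holds for \emph{any} pairwise disjoint family $\{R_i\}\subset\mathcal{D}(Q)$ covering $\cup_jP_j$, not only for the $P_j$ themselves. One then takes $\{R_i\}$ to be the maximal cubes of $\widetilde{\mathcal{S}}$ strictly contained in $Q$ (these do cover $\cup_jP_j$: each $P_j$ either survives the augmentation or lies in some $R'\in\mathcal{S}$ with $R'\subsetneq Q$) and iterates directly along the layers of $\widetilde{\mathcal{S}}$, using its sparseness to show the remainder term vanishes. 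This is precisely the quantitative ``telescoping across the join'' you allude to but do not supply; without it the sum over $R\in\mathcal{F}(Q_0)$, $R\subseteq Q$ cannot be majorized by the sum over $R\in\widetilde{\mathcal{S}}$, $R\subseteq Q$. A minor further point: stopping when $\Omega(b;R)>2\,\Omega(b;Q)$ is \emph{not} equivalent to stopping when the average of $|b-b_Q|$ over $R$ is large; only the latter condition makes the telescoping argument (and the bound $|b_{P_j}-b_Q|\lesssim\Omega(b;Q)$) go through.
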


This lemma is based on several known ideas. The first idea is an estimate by oscillations over a sparse family
(see \cite{F1,Hyt2,L1}) and the second idea is an augmentation process (see Section 2.1).

\begin{proof}
Fix a cube $Q\in \mathscr{D}$. Let us show that there exists a (possibly empty) family of
pairwise disjoint cubes $\{P_j\}\in{\mathcal D}(Q)$ such that $\sum_j|P_j|\le \frac{1}{2}|Q|$ and for a.e. $x\in Q$,
\begin{equation}\label{itest}
|b(x)-b_{Q}|\le 2^{n+2}\O(b;Q)+\sum_j|b(x)-b_{P_j}|\chi_{P_j}.
\end{equation}

Consider the set
$$E=\Big\{x\in Q:M^d_{Q}(b-b_{Q})(x)>2^{n+2}\O(b;Q)\Big\},$$
where $M^d_{Q}$ is the standard dyadic local maximal operator restricted to a cube $Q$. Then
$|E|\le \frac{1}{2^{n+2}}|Q|$.

If $E=\emptyset$, then (\ref{itest}) holds trivially with the empty family $\{P_j\}$.
Suppose that $E\not=\emptyset$. The Calder\'on-Zygmund decomposition applied to the function $\chi_E$ on $Q$ at height $\la=\frac{1}{2^{n+1}}$
produces pairwise disjoint cubes $P_j\in {\mathcal D}(Q)$ such that
$$\frac{1}{2^{n+1}}|P_j|\le |P_j\cap E|\le \frac{1}{2}|P_j|$$
and $|E\setminus \cup_jP_j|=0$. It follows that $\sum_j|P_j|\le \frac{1}{2}|Q|$ and $P_j\cap E^{c}\not=\emptyset$.

Therefore,
\begin{equation}\label{estbyosc}
|b_{P_j}-b_{Q}|\le \frac{1}{|P_j|}\int_{P_j}|b-b_{Q}|dx\le 2^{n+2}\O(b;Q)
\end{equation}
and for a.e. $x\in Q$,
$$
|b(x)-b_{Q}|\chi_{Q\setminus\cup_jP_j}\le 2^{n+2}\O(b;Q).
$$
From this,
\begin{eqnarray*}
|b(x)-b_{Q}|\chi_{Q}&\le& |b(x)-b_{Q}|\chi_{Q\setminus\cup_jP_j}(x)+\sum_j|b_{P_j}-b_{Q}|\chi_{P_j}\\
&+&\sum_j|b(x)-b_{P_j}|\chi_{P_j}\\
&\le& 2^{n+2}\O(b;Q)+\sum_j|b(x)-b_{P_j}|\chi_{P_j},
\end{eqnarray*}
which proves (\ref{itest}).

We now observe that if $P_j\subset R$, where $R\in {\mathcal D}(Q)$, then $R\cap E^{c}\not=\emptyset$,
and hence $P_j$ in (\ref{estbyosc}) can be replaced by $R$, namely, we have
$$|b_{R}-b_{Q}|\le 2^{n+2}\O(b;Q).$$
Therefore, if $\cup_{j}P_j\subset \cup_iR_i$, where $R_i\in {\mathcal D}(Q)$, and the cubes $\{R_i\}$ are
pairwise disjoint, then exactly as above,
\begin{equation}\label{itest1}
|b(x)-b_{Q}|\le 2^{n+2}\O(b;Q)+\sum_i|b(x)-b_{R_i}|\chi_{R_i}.
\end{equation}

Iterating (\ref{itest}), we obtain that there exists a $\frac{1}{2}$-sparse
family ${\mathcal F}(Q)\subset {\mathcal D}(Q)$ such that for a.e. $x\in Q$,
$$
|b(x)-b_Q|\chi_Q\le 2^{n+2}\sum_{P\in {\mathcal F}(Q)}\O(b;P)\chi_P.
$$

We now augment ${\mathcal S}$ by families ${\mathcal F}(Q), Q\in {\mathcal S}$. Denote the resulting family by $\widetilde {\mathcal S}$. By Lemma \ref{Lem:Aug},
$\widetilde {\mathcal S}$ is $\frac{\gamma}{2\left(1+\gamma\right)}$-sparse.

Let us show that (\ref{fuj}) holds. Take an arbitrary cube $Q\in\widetilde {\mathcal S}$. Let $\{P_j\}$ be the cubes appearing in (\ref{itest}).
Denote by ${\mathcal M}(Q)$ the family of the maximal pairwise disjoint cubes from $\widetilde {\mathcal S}$
which are strictly contained in $Q$. Then, by the augmentation process, $\cup_jP_j\subset \cup_{P\in {\mathcal M(Q)}}P$. Therefore, by (\ref{itest1}),
\begin{equation}\label{newiter}
|b(x)-b_Q|\chi_Q\le 2^{n+2}\Omega(b;Q)+\sum_{P\in {{\mathcal M}(Q)}}|b(x)-b_P|\chi_P(x).
\end{equation}
Iterating this estimate completes the proof. Indeed, split $\widetilde {\mathcal S}(Q)=\{P\in \widetilde {\mathcal S}:P\subseteq Q\}$ into the layers $\widetilde {\mathcal S}(Q)=\cup_{k=0}^{\infty}{\mathcal M}_k$,
where ${\mathcal M}_0=Q$, ${\mathcal M}_1={\mathcal M}(Q)$ and ${\mathcal M}_k$ is the family of the maximal elements of ${\mathcal M}_{k-1}$.
Iterating (\ref{newiter}) $k$ times, we obtain
\begin{equation}\label{newit1}
|b(x)-b_Q|\chi_Q\le 2^{n+2}\sum_{P\in \widetilde {\mathcal S}(Q)}\Omega(b,P)\chi_P+\sum_{P\in {{\mathcal M}_k}}|b(x)-b_P|\chi_P(x).
\end{equation}
Now we observe that since $\widetilde {\mathcal S}$ is $\frac{\gamma}{2\left(1+\gamma\right)}$-sparse,
$$\sum_{P\in {\mathcal M}_k}|P|\le \frac{1}{(k+1)}\sum_{i=0}^k\sum_{P\in {\mathcal M}_i}|P|\le \frac{1}{(k+1)}\sum_{P\in \widetilde {\mathcal S}(Q)}|P|\le \frac{2(1+\gamma)}{\gamma(k+1)}|Q|.$$
Therefore, letting $k\to \infty$ in (\ref{newit1}), we obtain (\ref{fuj}).
\end{proof}

Recall the well-known (see \cite{CMP} or \cite{LN} for a different proof)  bound for the sparse operator ${\mathcal A}_{\mathcal S}$, where
${\mathcal S}$ is $\ga$-sparse:
\begin{equation}\label{asp}
\|{\mathcal A}_{{\mathcal S}}\|_{L^p(w)}\le c_{\gamma,n,p}[w]_{A_p}^{\max\big(1,\frac{1}{p-1}\big)}\quad(1<p<\infty).
\end{equation}

\begin{proof}[
Proof of Theorem \ref{bhlw}]
By Theorem \ref{commpoint} and by duality,
\begin{eqnarray}
&&\|[b,T]\|_{L^p(\mu)\to L^p(\la)}\label{dualest}\\
&&\le c_nC_T\sum_{j=1}^{3^n}\big(\|{\mathcal T}_{\mathcal S_j,b}\|_{L^p(\mu)\to L^p(\la)}+\|{\mathcal T}^{\star}_{\mathcal S_j,b}\|_{L^p(\mu)\to L^p(\la)}\big)\nonumber\\
&&=c_nC_T\sum_{j=1}^{3^n}\big(\|{\mathcal T}^{\star}_{\mathcal S_j,b}\|_{L^{p'}(\si_{\la})\to L^{p'}(\si_{\mu})}+\|{\mathcal T}^{\star}_{\mathcal S_j,b}\|_{L^p(\mu)\to L^p(\la)}\big),\nonumber
\end{eqnarray}
where ${\mathcal S}_j\subset {{\mathscr D}^{(j)}}$ is $\frac{1}{2\cdot 9^n}$-sparse.

By Lemma \ref{augm}, there are $\frac{1}{8\cdot 9^n}$-sparse families
$\widetilde{\mathcal S_j}$ containing ${\mathcal S}_j$, and also, for every cube $Q\in \widetilde{\mathcal S_j}$,
\begin{eqnarray*}
&&\int_Q|b(x)-b_Q||f|\le c_n\sum_{R\in \widetilde{\mathcal S}_j, R\subseteq Q}
\Omega(b;R)\int_R|f|\\
&&\le c_n\|b\|_{BMO_{\nu}}\sum_{R\in \widetilde{\mathcal S}_j, R\subseteq Q}|f|_R\nu(R)\le
c_n\|b\|_{BMO_{\nu}}\int_Q\big({\mathcal A}_{\widetilde{\mathcal S}_j}|f|\big)\nu dx.
\end{eqnarray*}
Therefore,
$$
{\mathcal T}_{\widetilde{\mathcal S}_j,b}^{\star}|f|(x)\le c_n\|b\|_{BMO_{\nu}}
{\mathcal A}_{{\widetilde{\mathcal S}_j}}\big(({\mathcal A}_{{\widetilde{\mathcal S}_j}}|f|)\nu\big)(x).
$$
Hence, applying (\ref{asp})  twice yields
\begin{eqnarray}
\|{\mathcal T}^{\star}_{\widetilde{\mathcal S}_j,b}\|_{L^p(\mu)\to L^p(\la)}&\le& c_{n,p}\|b\|_{BMO_{\nu}}\|{\mathcal A}_{\widetilde{\mathcal S}_j}\|_{L^p(\la)}\|{\mathcal A}_{\widetilde{\mathcal S}_j}\|_{L^p(\mu)}\label{case}\\
&\le& c_{n,p}\big([\la]_{A_p}[\mu]_{A_p}\big)^{\max\big(1,\frac{1}{p-1}\big)}\|b\|_{BMO_{\nu}}.\nonumber
\end{eqnarray}

From this and from the facts that $\nu=(\mu/\la)^{1/p}=(\si_{\la}/\si_{\mu})^{1/p'}$ and $[\si_w]_{A_{p'}}=[w]_{A_p}^{\frac{1}{p-1}}$, we obtain
\begin{eqnarray*}
\|{\mathcal T}^{\star}_{\widetilde{\mathcal S}_j,b}\|_{L^{p'}(\si_{\la})\to L^{p'}(\si_{\mu})}&\le& c_{n,p'}
\big([\si_{\mu}]_{A_{p'}}[\la_{\mu}]_{A_{p'}}\big)^{\max\big(1,\frac{1}{p'-1}\big)}\|b\|_{BMO_{\nu}}\\
&=& c_{n,p'}\big([\mu]_{A_p}[\la]_{A_p}\big)^{\max\big(1,\frac{1}{p-1}\big)}\|b\|_{BMO_{\nu}},
\end{eqnarray*}
It remains to combine this estimate with
 (\ref{dualest}) and (\ref{case}), and to observe that
${\mathcal T}_{{\mathcal S}_j,b}^{\star}|f(x)|\le
{\mathcal T}_{\widetilde{\mathcal S}_j,b}^{\star}|f(x)|$.
\end{proof}

\vskip 5mm
{\bf Acknowledgement.}
The third author thanks the Departamento de Matem\'atica of the Universidad Nacional del Sur, for the warm hospitality shown during his visit between February and May 2016.


\begin{thebibliography}{99}

\bibitem{BFP}
F. Bernicot, D. Frey and S. Petermichl, {\it Sharp weighted norm estimates beyond Calder\'on-Zygmund theory}, preprint.
Available at
http://arxiv.org/abs/1510.00973

\bibitem{SB}
S. Bloom, {\it A commutator theorem and weighted $BMO$}, Trans. Amer. Math. Soc., {\bf 292} (1985), no. 1, 103--122.



\bibitem{CPP}
D. Chung, C. Pereyra and C. P\'erez, {\it
Sharp bounds for general commutators on weighted Lebesgue spaces}, Trans. Amer. Math. Soc., {\bf 364} (2012), 1163--1177.

\bibitem{CRW}
R.R. Coifman, R. Rochberg and G. Weiss, {\it
Factorization theorems for Hardy spaces in several variables},
Ann. of Math., {\bf 103} (1976), no. 3, 611--635.

\bibitem{CR}
J.M. Conde-Alonso and G. Rey, {\it
A pointwise estimate for positive dyadic shifts and some applications}, Math. Ann.,
doi:10.1007/s00208-015-1320-y

\bibitem{CUMP}
D. Cruz-Uribe, J.M. Martell and C. P\'erez, Weights,
extrapolation and the theory of Rubio de Francia. Operator Theory:
Advances and Applications, 215. Birkh\"auser/Springer Basel AG, Basel,
2011.

\bibitem{CMP}
D. Cruz-Uribe, J.M. Martell and C. P\'erez, {\it Sharp weighted
estimates for classical operators}, Adv. Math., {\bf 229} (2012), no. 1, 408--441.

\bibitem{CPO}
A. Culiuc, F. Di Plinio and  Y. Ou, {\it Domination of multilinear singular integrals by positive sparse forms},
preprint. Available at http://arxiv.org/abs/1603.05317



\bibitem{DLR}
C. Domingo-Salazar, M.T. Lacey and  G. Rey, {\it Borderline weak type estimates for singular integrals and square functions},
Bull. Lond. Math. Soc., {\bf 48} (2016), no. 1, 63--73.

\bibitem{FS}
C. Fefferman and E.M. Stein, {\it Some maximal inequalities}, Amer.
J. Math., {\bf 93} (1971), 107--115.

\bibitem{F1}
N. Fujii, {\it A proof of the Fefferman-Stein-Str\"omberg inequality
for the sharp maximal functions}, Proc. Amer. Math. Soc. {\bf 106}
(1989), no. 2, 371--377.

\bibitem{G2}
L. Grafakos, Modern Fourier analysis. Second edition. Graduate Texts in Mathematics, 250. Springer, New York, 2009.

\bibitem{HLW}
I. Holmes, M.T. Lacey and B.D. Wick, {\it Commutators in the Two-Weight Setting}, preprint. Available at
http://arxiv.org/abs/1506.05747

\bibitem{HW}
I. Holmes and B.D. Wick, {\it Two weight inequalities for iterated commutators with Calderón-Zygmund operators},
preprint. Available at http://arxiv.org/abs/1509.03769

\bibitem{H}
T.P. Hyt\"onen, {\it The sharp weighted bound for general
Calder\'on-Zygmund operators}, Annals of Math. {\bf 175} (2012), no. 3, 1473--1506.

\bibitem{Hyt2}
T. Hyt\"onen, {\it The $A_2$ theorem: remarks and complements}, Contemp. Math., {\bf 612} (2014),  91--106.

\bibitem{HLP}
T.P. Hyt\"onen, M.T. Lacey and C. P\'erez, {\it Sharp weighted bounds for the $q$-variation of singular integrals}, Bull. Lond. Math. Soc. {\bf 45} (2013),  no. 3, 529--540.

\bibitem{HP}
T.P. Hyt\"onen and C. P\'erez, {\it The $L(\log L)^{\varepsilon}$
endpoint estimate for maximal singular integral operators}, J. Math.
Anal. Appl. {\bf 428} (2015), no. 1, 605--626.

\bibitem{HRT}
T.P. Hyt\"onen, L. Roncal and O. Tapiola, {\it
Quantitative weighted estimates for rough homogeneous singular integrals}, to appear in Israel J. Math.
Available at http://arxiv.org/abs/1510.05789

\bibitem{KR}
M.A. Krasnoselʹski$\breve \i$ and Ja.B. Ruticki$\breve \i$, Convex functions and Orlicz spaces. P. Noordhoff Ltd., Groningen, 1961.

\bibitem{La}
M.T. Lacey, {\it An elementary proof of the $A_2$ bound}, to appear in Israel J. Math. Available at http://arxiv.org/abs/1501.05818

\bibitem{L1}
A.K. Lerner, {\it A pointwise estimate for local sharp maximal function
with applications to singular integrals}, Bull. London Math. Soc.,
{\bf 42} (2010), no. 5, 843--856.

\bibitem{Le}
A.K. Lerner, {\it A simple proof of the $A_2$ conjecture}, Int. Math. Res. Not. 2013, no. 14, 3159-–3170.

\bibitem{L}
A.K. Lerner, {\it On pointwise estimates involving sparse operators},
New York J. Math., {\bf 22} (2016), 341--349.

\bibitem{LN}
A.K. Lerner and F. Nazarov, {\it Intuitive dyadic calculus: the basics}, preprint. Available at
http://arxiv.org/abs/1508.05639

\bibitem{ON}
R. O'Neil, {\it Fractional integration in Orlicz spaces. I.},
Trans. Amer. Math. Soc., {\bf 115} (1965), 300--328.

\bibitem{OC}
C. Ortiz-Caraballo, {\it Quadratic $A_1$ bounds for commutators of singular integrals with BMO functions}, Indiana Univ. Math. J., {\bf 60} (2011), no. 6, 2107--2129.

\bibitem{P1}
C. P\'erez, {\it Weighted norm inequalities for singular integral
operators}, J. London Math. Soc., {\bf 49} (1994), 296--308.

\bibitem{P2}
C. P\'erez, {\it Endpoint estimates for commutators of singular integral operators}, J. Funct. Anal.  {\bf 128} (1995), no. 1, 163--185.

\bibitem{PP}
C. P\'erez and G. Pradolini, {\it Sharp weighted endpoint estimates for commutators
of singular integrals}, Michigan Math. J., {\bf 49} (2001), no. 1, 23-–37.

\bibitem{PRR1}
C. P\'erez and I.P. Rivera-R\'ios, {\it Borderline weighted estimates
for commutators of singular integrals}, to appear in Israel J. Math.
Available at http://arxiv.org/abs/1507.08568

\bibitem{PRR2}
C. P\'erez and I.P. Rivera-R\'ios, {\it Three observations on commutators of Singular Integral Operators with BMO functions},
AWM-Springer Series, Harmonic Analysis, Partial Differentail Equations, Complex Analysis, Banach Spaces, and Operator Theory. Celebrating Cora Sadosky's Life. Vol. 2, to appear. Available at http://arxiv.org/abs/1601.03193

\bibitem{RR}
M.M. Rao and Z.D. Ren, Theory of Orlicz Spaces. Monographs and Textbooks in Pure and Applied Mathematics, 146. Marcel Dekker, 1991.

\bibitem{RT}
M.C. Reguera and C. Thiele, {\it The Hilbert transform does not map $L^1(Mw)$ to $L^{1,\infty}(w)$}, Math. Res. Lett. {\bf 19} (2012),  no. 1, 1--7.

\end{thebibliography}
\end{document}